\documentclass[10pt,reqno]{amsart}
%%Basic preamble
\usepackage[utf8]{inputenc}  
\usepackage{graphicx}
\usepackage{pgfplots}
\usepackage{float}%use the H
\usepackage{bm}
\usepackage{stmaryrd}
\usepackage{subfig}
\usepackage{graphicx}
\usepackage{xcolor}
\usepackage{bm}
\usepackage{url}

\usepackage{multirow} % If you still need multirow
\usepackage{subcaption}    % For subfigures and subtables
\usepackage{siunitx}
\usepackage{subfloat}

\usepackage{amsmath,amssymb, mathtools}
\usepackage{hyperref}
\usepackage{cleveref}
\usepackage{dsfont}
\usepackage{esvect}
\usepackage{booktabs}

\usepackage[a4paper,top=3 cm,bottom=3 cm,left=2.5 cm,right=2.5 cm]{geometry}
\sloppy 

%%New environments 

\newtheorem{theorem}{Theorem}[section]

\newtheorem{remark}{Remark}[section]
\newtheorem*{maintheorem*}{Main Theorem}
\allowdisplaybreaks
\numberwithin{equation}{section}

\allowdisplaybreaks
\numberwithin{equation}{section}

%%Development
%\usepackage{todonotes}

\definecolor{blue1}{RGB}{28, 69, 135}
\definecolor{green1}{RGB}{58, 121, 30}
\definecolor{red1}{RGB}{164, 2, 0}

\def\BibTeX{{\rm B\kern-.05em{\sc i\kern-.025em b}\kern-.08em
    T\kern-.1667em\lower.7ex\hbox{E}\kern-.125emX}}
\markboth{\journalname, VOL. XX, NO. XX, XXXX 2017}
{Hernandez, Zuazua: Exploring PDE Solutions via Random Batch Method}

%%%%%%%%%%%%%%%%%%%%%%%%%%%%%%%%%%%%%%%%%%%%%%%%%%%%%%%%%
%%%%%%%%%%%%%%%%%%%%%%%%%%%%%%%%%%%%%%%%%%%%%%%%%%%%%%%%%

\newcommand{\pt}{\partial_t}
\newcommand{\px}{\partial_x}
\newcommand{\pxx}{\partial_{xx}}
\newcommand{\bomega}{\boldsymbol{\omega}}

\newcommand{\y}{{\boldsymbol{y}}}
\newcommand{\f}{{\boldsymbol{f}}}

\newcommand{\Y}{\boldsymbol{Y}}
\newcommand{\F}{\boldsymbol{F}}

\def\R{\mathbb R}

\def\to{\rightarrow}
\def\E{\mathbb E}

\usepackage{titletoc}

%%%% TABLE SETTINGS %%%%

\setlength{\arrayrulewidth}{0.2mm}
%\setlength{\tabcolsep}{18pt}

%%%%%%%%%%%%%%%%%%%%%%%%%%%%%%%%%%%%%%%%%%%%%
%Title

\makeatletter
\newcommand\thankssymb[1]{\textsuperscript{\@fnsymbol{#1}}}

\makeatother

%%Document
\begin{document}

%Random Batch Methods for PDE control on graphs

\title[Random Batch Methods for Discretized PDEs on Graphs]{Random Batch Methods for Discretized PDEs on Graphs}

\author[M. Hern\'{a}ndez]{Mart\' in Hern\' andez\thankssymb{2}}
\email{martin.hernandez@fau.de}

\author[E. Zuazua]{Enrique Zuazua\thankssymb{1}\thankssymb{2}\thankssymb{3}}

\thanks{\thankssymb{2}
 Chair for Dynamics, Control, Machine Learning, and Numerics, Alexander von Humboldt-Professorship, Department of Mathematics,  Friedrich-Alexander-Universit\"at Erlangen-N\"urnberg,
91058 Erlangen, Germany.}

\thanks{\thankssymb{1} 
 Departamento de Matem\'{a}ticas,
Universidad Aut\'{o}noma de Madrid,
28049 Madrid, Spain.
}

\thanks{\thankssymb{3} 
Chair of Computational Mathematics, University of Deusto. Av. de las Universidades, 24,
48007 Bilbao, Basque Country, Spain.
}
\email{\texttt{enrique.zuazua@fau.de}}

\subjclass[2020]{65C99, 65M55, 65M75, 35R02}
\keywords{Random Batch Methods, Numerical Methods, Partial Differential Equations on Graphs, Heat Equation, Optimal Control.}

\begin{abstract} 

Gas transport and other complex real-world challenges often require solving and controlling partial differential equations (PDEs) defined on graph structures, which typically demand substantial memory and computational resources. The Random Batch Method (RBM) offers significant relief from these demands by enabling the simulation of large-scale systems with reduced computational cost.

In this paper, we analyze the application of RBM for solving PDEs on one-dimensional graphs, specifically concentrating on the heat equation. Our approach involves a two-step process: initially discretizing the PDE to transform it into a finite-dimensional problem, followed by the application of the RBM. We refer to this integrated approach as \emph{discretize+RBM}.

We establish the convergence of this method in expectation, under the appropriate selection and simultaneous reduction of the switching parameter in RBM and the discretization parameters.

Moreover, we extend these findings to include the optimal control of the heat equation on graphs, enhancing the practical utility of our methodology.

The efficacy and computational efficiency of our proposed solution are corroborated through numerical experiments that not only demonstrate convergence but also show significant reductions in computational costs.

Our algorithm can be viewed as a randomized variant of domain decomposition, specifically adapted for PDEs defined on graph structures. It is sufficiently versatile to be applied to a wide range of linear PDEs -- not just the heat equation -- while maintaining comparable analytical guarantees and convergence properties.

\end{abstract}

\maketitle

\tableofcontents
\section{Introduction}

\subsection{Motivation} The simulation of complex models such as gas transport in extensive pipeline networks is often computationally demanding and resource-intensive \cite{MR3815942, MR4175147}. Developing efficient algorithms that can deliver faster solutions while reducing resource consumption is crucial. A notable advancement in this field is the Random Batch Method (RBM), which was originally introduced to efficiently simulate large-scale interacting particle systems \cite{ShietalRBM}. RBM achieves efficiency by randomly grouping particles into small batches and computing their interactions within brief time subintervals. This method of breaking down complex problems into simpler, manageable subproblems is fundamental to RBM's success and can be applied across various domains. The Mini-Batch Gradient Descent algorithm in machine learning, which strikes a balance between computational efficiency and model performance, employs a similar approach \cite[Section 3]{MR3797719}.

RBM is also adaptable to high-dimensional linear dynamical systems, where it involves decomposing the governing matrix into smaller sub-matrices. The time interval is partitioned into subintervals, during each of which a randomly selected batch of sub-matrices defines a randomized dynamical system. As shown in \cite{MR4433122}, the solution of this random system converges in expectation to the solution of the original system when the subintervals are sufficiently small. This methodology has been proven effective for optimal-control problems as well \cite{MR4433122}.

\subsection{Main Contribution} In this paper, we extend the RBM to address partial differential equations (PDEs) and their associated optimal control problems. Our approach, which we term \emph{discretize+RBM}, begins with the spatial discretization of PDEs, converting them into finite-dimensional dynamical systems, followed by the application of RBM. This method necessitates a careful convergence analysis, which depends on the appropriate selection of discretization and randomization parameters.

Control theory cautions that the initial discretization of a system prior to the application of control strategies can lead to the generation of high-frequency spurious solutions \cite{MR1036928, MR1065445, MR1039237}. This phenomenon is particularly relevant in hyperbolic wave-like systems, where it can lead to divergence and stability issues. Consequently, in the current framework, both the discretization and RBM methodologies must be meticulously calibrated and coordinated to circumvent such instabilities. This necessitates a thorough analysis to ensure that the discretization does not introduce anomalies that could amplify under the RBM's stochastic dynamics.

Our focus in this article is primarily on the one-dimensional heat equation on a graph. We employ the Finite Element Method (FEM) for spatial discretization, followed by the application of RBM. We establish that a careful selection of both the time subinterval size for RBM and the spatial mesh size for FEM ensures the convergence of the combined FEM+RBM methodology. This convergence is then extended to quadratic optimal control problems.

We validate our analytical results through numerical experiments that not only confirm convergence but also demonstrate the computational efficiency of our approach. Our method can be viewed as a randomized version of the classical domain or graph decomposition methods applied to matrix decompositions in discrete dynamics. Additionally, partitioning the problem into small batches not only lowers memory requirements by handling subproblems independently but also leverages the inherent structure of the system for parallelization. This parallel execution further accelerates computations, contributing to a significant reduction in total computational time.

Although our primary focus is on the heat equation and the FEM discretization scheme, the principles underlying our \emph{discretize+RBM} methodology are applicable to other linear PDEs, higher-dimensional settings, and various discretization techniques. This versatility highlights the potential of our approach as a broadly applicable tool for efficiently solving and controlling PDEs in complex domains.

\subsection{Related Work}
The RBM was initially proposed for finite-dimensional  interacting particle systems, leading to the development of convergence guarantees \cite{MR4230431} and the management of antisymmetric interactions \cite{MR4300142}. The approach has been expanded to linear dynamical systems integrating it with Model Predictive Control (MPC), providing convergence estimates \cite{MR4307003, veldman2023stability}. The stochastic process model introduced by \cite{Latz} represents another adaptation of the RBM concept, demonstrating convergence as the step size diminishes.

The application of RBM to PDEs includes analyses on particle-interacting systems' mean-field limits \cite{MR4361973} and on the homogeneous Landau and Fokker-Planck equations \cite{MR4436794, MR4744252}. These studies employ RBM to approximate PDEs through interacting particle systems. Our approach, however, directly utilizes classical PDE discretization methods such as FEM. Other methodologies that combine discretization with randomization include the work of \cite{eisenmann2022randomized} on parabolic PDEs and \cite{corella2024minibatchdescentsemiflows} on gradient flows, both offering insights into convergence behaviors under various conditions.

\subsection{Outline}
The remainder of this article is structured as follows: \Cref{sec:prelim} introduces our model and outlines the FEM for the one-dimensional heat equation, along with a detailed description of the RBM, as discussed in \cite{MR4433122}. \Cref{sec:main_results} is devoted to presenting our main findings on convergence for PDE optimal control. \Cref{sec:numerical} presents numerical simulations to validate our results and illustrate the efficacy of the random graph decomposition approach.

\section{Preliminaries}\label{sec:prelim}
Let us first consider $L,\,T > 0$ and the following one-dimensional heat equation:
\begin{align}\label{eq:heat_eq}
    \begin{cases}
        \pt y - \pxx y = f, \quad &(x,t) \in (0,L) \times (0,T),\\
        y(0,t) = y(L,t) = 0, & t \in (0,T),\\
        y(x,0) = y^0(x),  &x \in (0,L).
    \end{cases}
\end{align}
Here, $y^0\in L^2(0,L)$ is the initial condition, and $f \in L^2(0,T;H^{-1}(0,L))$ is a source term. According to \cite[Section 7.1.2]{MR1625845}, the existence and uniqueness of the solution $y \in C((0,T); L^2(0,L))\cap L^2(0,T;H_0^1(0,L))$ for \eqref{eq:heat_eq} are guaranteed.
\smallbreak
\subsection{The Finite Element Method}\label{section_FEM}
In this section, we introduce FEM to semi-discretize \eqref{eq:heat_eq}.

Consider a spatial mesh step $h = L/(N + 1)$ with $N \in \mathbb{N}$, so that the partition defined by the nodes 
\begin{align}\label{eq:spatial_nodes}
     x_j = jh,\quad j \in \{0, \dots, N + 1\},
\end{align}
decomposes the interval $[0, L]$ into $N + 1$ subintervals of length $h$. Let us introduce the continuous piecewise linear functions $\phi_j(x)$, such that $\phi_j(x_l) = \delta_{jl}$ for $j \in \{ 1,\dots, N \}$ and $l \in \{ 0,\dots, N + 1 \}$, where $\delta$ denotes the Kronecker delta. We define the finite-element space $V_{h}$ as the span of $\{\phi_j\}_{j=1}^N$.

Assume that $y^0 \in H^1_0(0,L)$, and denote by $y_h^0$ the projection of $y^0$ onto $V_{h}$ with respect to the scalar product of $H^1_0(0,L)$ (see \cite[Section 3.5]{MR1299729}). Consider the semi-discrete (variational) formulation of \eqref{eq:heat_eq} as follows: Find $y_h \in C^1(0,T;V_{h})$ such that
\begin{align}\label{eq:variational_fem}
    \begin{cases}
   \displaystyle \int_0^L \pt y_h(x,t) \phi_j(x)\,dx + \int_0^L \px y_h(x,t) \px \phi_j(x)\,dx  = \int_0^L f(x,t)\phi_j(x)\,dx,\vspace{0.1cm}\\
   \displaystyle \int_0^L y_h(x,0)\phi_j(x)\,dx = \int_0^L y^{0}(x)\phi_j(x)\,dx,
    \end{cases}
\end{align}
for every $t \in (0,T)$ and $j \in \{1,\dots, N\}$. According to \cite[Proposition 11.2.1]{MR1299729}, the following theorem holds.

\begin{theorem}\label{th:fin_elem1}
Assume that $y^0 \in H_0^1(0,L)$ and $f \in L^2(0,T;L^2(0,L))$. Let $y$ be the solution of \eqref{eq:heat_eq} and $y_h$ the solution of \eqref{eq:variational_fem} given by \eqref{eq:def_yh_fem}. Then, there exists a constant $C > 0$, independent of $h > 0$, such that 
\begin{align}\label{eq:estimation_finite_elements}
        \|y(t) - y_h(t)\|_{L^2(0,L)}^2 \leq Ch^4\left(\|y^0\|_{H_0^1(0,L)}^2 + \|f\|^2_{L^2(0,T;L^2(0,L))}\right),
\end{align}
for every $t \in (0,T)$.
\end{theorem}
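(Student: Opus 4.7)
The plan is to reduce the semi-discrete error analysis to a standard elliptic projection bound plus a finite-dimensional energy estimate via the classical Ritz projection splitting. First I would introduce the Ritz projection $R_h:H_0^1(0,L)\to V_h$ characterized by
\[
\int_0^L \px(u - R_h u)\,\px v_h\,dx = 0 \quad\text{for all } v_h\in V_h,
\]
which coincides with the orthogonal projection onto $V_h$ with respect to the $H_0^1$ inner product; this is precisely the projection used to define the initial datum $y_h(0)$ in \eqref{eq:variational_fem}. Accordingly, I decompose
\[
y(t) - y_h(t) = \underbrace{(y(t) - R_h y(t))}_{=:\,\rho(t)} + \underbrace{(R_h y(t) - y_h(t))}_{=:\,\theta(t)},
\]
where $\theta(t)\in V_h$ for all $t$ and, crucially, $\theta(0)=0$.

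For $\rho$, the classical $L^2$ approximation estimate for piecewise linear finite elements in one dimension (proved by the Aubin--Nitsche duality argument) gives $\|\rho(t)\|_{L^2(0,L)}\le Ch^2\|y(t)\|_{H^2(0,L)}$. For $\theta$, subtracting \eqref{eq:variational_fem} from the variational identity satisfied by $R_h y$ and using the Ritz orthogonality to cancel the stiffness contribution of $\rho$ yields
\[
\int_0^L \pt\theta\,v_h\,dx + \int_0^L \px\theta\,\px v_h\,dx = -\int_0^L \pt\rho\,v_h\,dx \qquad \forall v_h\in V_h.
\]
Testing with $v_h=\theta$, discarding the nonnegative stiffness term, and invoking Grönwall with $\theta(0)=0$ bounds $\|\theta(t)\|_{L^2}^2$ by a time integral of $\|\pt\rho\|_{L^2}^2$. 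The triangle inequality $\|y-y_h\|_{L^2}\le\|\rho\|_{L^2}+\|\theta\|_{L^2}$ then closes the loop, contributing the factor $h^4$ after squaring.

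The main obstacle will be bookkeeping the regularity of $y$ so as to match the right-hand side in \eqref{eq:estimation_finite_elements}, which is phrased in terms only of $\|y^0\|_{H_0^1}$ and $\|f\|_{L^2(0,T;L^2)}$. Under these hypotheses parabolic regularity provides $y\in L^\infty(0,T;H_0^1)\cap L^2(0,T;H^2)$ and $\pt y\in L^2(0,T;L^2)$, controlled by the data; the naive bound $\|\pt\rho\|_{L^2}\le Ch^2\|\pt y\|_{H^2}$ is not available and must be replaced by a duality argument, or equivalently by Duhamel together with the smoothing of the semi-discrete heat semigroup, while the identity $-\pxx y = f - \pt y$ upgrades $L^2$-in-time control of $\pt y$ and $f$ into the $H^2$-in-space control needed for the $\rho$ estimate. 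Carrying out this regularity accounting delivers precisely the bound \eqref{eq:estimation_finite_elements}; the result is classical and is exactly the content of \cite[Proposition 11.2.1]{MR1299729}, which the excerpt directly invokes.
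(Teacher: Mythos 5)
Your proposal is correct and is essentially the paper's own route: the paper gives no proof beyond invoking \cite[Proposition 11.2.1]{MR1299729}, and the Ritz-projection splitting $y-y_h=\rho+\theta$ with the energy estimate for $\theta$ and the duality/smoothing bookkeeping for the low-regularity data is precisely the classical argument behind that citation. One caveat (an inconsistency in the paper, not in your argument): the second line of \eqref{eq:variational_fem} actually imposes the $L^2(0,L)$-projection of $y^0$, whereas the surrounding text announces the $H^1_0(0,L)$-projection, so your claim $\theta(0)=0$ matches the paper's prose but not the displayed initial condition literally.
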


Observe that \eqref{eq:variational_fem} defines a system of ordinary differential equations. Indeed, since $y_h \in C^1(0,T;V_{h})$, the semi-discrete solution can be written as
\begin{align}\label{eq:def_yh_fem}
    y_h(x,t)=\sum_{j=1}^N y_j(t)\phi_j(x),
\end{align}
with coefficients $y_j \in C^1(0,T)$ and the corresponding finite-dimensional state $\y_h(t) = (y_1(t), \dots, y_N(t))$ solves
\begin{align}\label{eq:fem_system}
    \begin{cases}
        E_h\pt\y_h + R_h\y_h = \f_h, & t\in (0,T), \\
        E_h\y_h(0) = \y^0_h,
    \end{cases}
\end{align}
where the mass and stiffness matrices $E_h,\,R_h \in \R^{N \times N}$ are given by
\[
E_h(j,k)=\int_{0}^L\phi_j(x)\phi_k(x)\,dx, \quad R_h(j,k)=\int_0^L \px \phi_j(x) \px\phi_k(x)\,dx,
\]
for $j,\,k \in \{1,\dots, N\}$, and $\y^0_h$ and $\f_h$ are defined component-wise as 
\begin{align*}
    y_j^0=\int_0^L y^0(x)\phi_j(x)\, dx,\quad\text{and}\quad f_j=\int_0^L f(x)\phi_j(x)\, dx,
\end{align*}
respectively. In the present setting, these matrices can be computed explicitly and take the form:
\begin{align}\label{eq:stiffness_mass_matrices}
    E_h = h\begin{pmatrix}
        2/3 & 1/6 & 0 & \cdots & 0 \\
        1/6 & \ddots & \ddots & \ddots & \vdots \\
        0 & \ddots & \ddots & \ddots & 0 \\
        \vdots & \ddots & \ddots & \ddots & 1/6 \\
        0 & \cdots & 0 & 1/6 & 2/3
    \end{pmatrix}, \quad
    R_h = \frac{1}{h} \begin{pmatrix}
        2 & -1 & 0 & \cdots & 0 \\
        -1 & \ddots & \ddots & \ddots & \vdots \\
        0 & \ddots & \ddots & \ddots & 0 \\
        \vdots & \ddots & \ddots & \ddots & -1 \\
        0 & \cdots & 0 & -1 & 2
    \end{pmatrix}.
\end{align}

Of course, this finite-dimensional interpretation of the FEM approximation of the heat equation can be extended to the multi-dimensional setting in arbitrary domains, although, in that case, the matrix representation is not that explicit.
\subsection{Overview of the RBM}\label{sec:rbm}
In this section, we provide an introduction to the RBM, following \cite{MR4433122}, in the context of \eqref{eq:fem_system}. We first apply the RBM to approximate the heat equation numerically, and then consider an optimal control problem.

Consider the time step $\delta = T/(K + 1) > 0$, for $K \in \mathbb{N}$. We define the time intervals $[(k-1)\delta, k\delta]$ for each $k \in \{1, \dots, K\}$. Let $M > 0$ be a positive integer and introduce a family of identically and independently distributed random variables $\{\omega_k\}_{k=1}^K$, each taking values in $\{1, \dots, 2^M\}$. For each $k \in \{1, \dots, K\}$, we denote by $p_i \in [0,1]$ the probability that $\omega_k$ equals $i \in \{1, \dots, 2^M\}$. We assume that 
\begin{align*}
 \sum_{i=1}^{2^M} p_i = 1,   
\end{align*}
and we denote by $\boldsymbol{\omega}$ the vector
\begin{align*}
    \boldsymbol{\omega} = (\omega_1, \dots, \omega_K) \in \{1, \dots, 2^M\}^K.
\end{align*}
This induces a random variable with values in $\mathcal{P}(\{1, \dots, M\})$, where $\mathcal{P}$ denotes the power set. Indeed, for each $S_i \in \mathcal{P}(\{1, \dots, M\})$ with $i \in \{1, \dots, 2^M\}$, when $\omega_k = i$, the random variable $S_{\omega_k}$ takes the value $S_i$. Hence, the probability law of $S_{\omega_k}$ is entirely derived from that of $\omega_k$.

This random setting allows us to introduce a random dynamic system as follows: Let $R_h \in \mathbb{R}^{N \times N}$ be the matrix in \eqref{eq:stiffness_mass_matrices}, and consider a decomposition of $R_h$ into sub-matrices $\{R_{h,m}\}_{m=1}^M$ such that
\begin{align}\label{eq:decom_A}
R_h = \sum_{m=1}^M R_{h,m}.
\end{align} 

Then, for each time interval $ [(k-1)\delta, k\delta)$, we randomly select a set of indices $S_{\omega_k}$ and define the corresponding random, time-dependent matrix \begin{align}\label{eq:definition_matrix_R_rb}
     R_{rb}(\boldsymbol{\omega}, t) = \sum_{m \in S_{\omega_k}} \frac{R_{h,m}}{\pi_m}, \quad t \in [(k-1)\delta, k\delta),
\end{align}
for each $k \in \{1, \dots, K\}$. Here, $\pi_m$, given by
\begin{align*}
    \pi_m := \sum_{i \in \{j \in \{1, \dots, 2^M\} \,: \,m \in S_j\}} p_i,
\end{align*}
is a normalization constant ensuring that $\mathbb{E}[R_{rb}]=R_h$. In the following, we assume that $\pi_m>0$ for every $m\in\{1,\dots,M\}$ (as in \cite{MR4433122}). Observe that the matrix $ R_{rb}$ is piecewise constant with respect to time and randomly switches at each time step.

\begin{remark}[On the choice of decomposition and sampling sets]
    
    Although the abstract framework allows for the decomposition of the stiffness matrix $R_h$ into $M = N^2$ scalar $1\times1$ submatrices $R_{h,m}$ -- one per entry of $R_h$ -- this choice is computationally impractical.

Instead, we partition the matrix $R_h \in \mathbb{R}^{N \times N}$ into a smaller number of rectangular blocks such that the decomposition \eqref{eq:decom_A} holds. For instance, one may consider four submatrices $R_{h,m}$, each containing a nonzero block of approximate size $(N/2) \times (N/2)$. As a result, we typically choose $M \ll N$ in practice.

Similarly, the random variable $\omega_k$ need not take values in the full set ${1, \dots, 2^M}$. Instead, we may assign positive probability only to a subset of interest -- usually much smaller than $2^M$ -- by setting $p_i = 0$ for all other values.

As shown in Section~\ref{sec:numerical}, when $R_h$ arises from the discretization of a PDE on a graph, the block partition has a natural geometric interpretation: each edge contributes one block. In our numerical experiments, we cluster the edges into three groups, thereby defining three large blocks and, consequently, three submatrices $R_{h,1}$, $R_{h,2}$, and $R_{h,3}$ (i.e., $M = 3$). We then assign positive probability only to the singleton subsets
$S_1 = {1}$, $S_2 = {2}$, and $S_3 = {3}$ (see Figure~\ref{fig:graph_and_dec}).

This minimalist strategy ensures that $\pi_m > 0$ for all $m \in {1, \dots, M}$. Therefore, over each time interval $[(k-1)\delta, k\delta)$, the matrix $R_{rb}(\boldsymbol{\omega}, t)$ randomly selects one of the matrices $R_{h,1}$, $R_{h,2}$, or $R_{h,3}$.

\end{remark}

\begin{figure}[H]
    \centering
\includegraphics[width=0.8\linewidth]{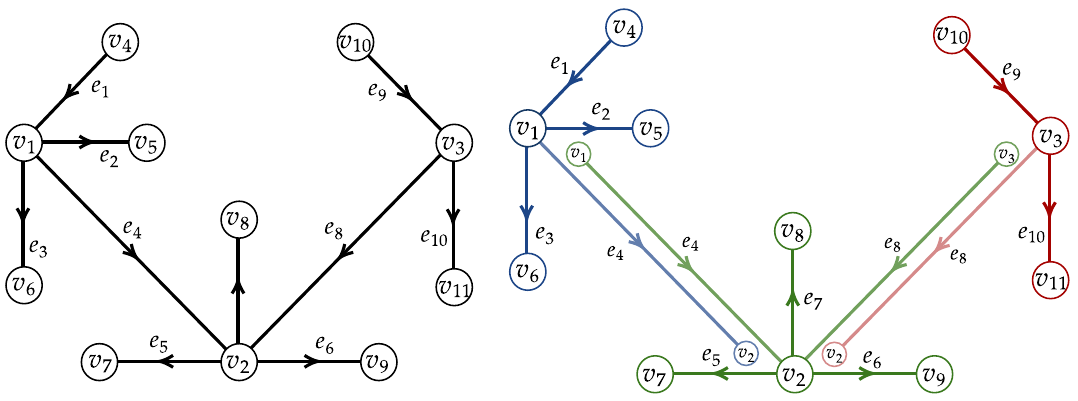}
    \caption{Illustration of a graph decomposition into three edge groups (shown in different colors).  
Each group is associated with one block of the stiffness matrix. Since edges $e_4$ and $e_8$ belong to two groups, the chosen block overlaps.}
    \label{fig:graph_and_dec}
\end{figure}
Now, we consider the following random dynamical system:
\begin{align}\label{eq:random_system}
    \begin{cases}
        E_h \partial_t \y_R(\boldsymbol{\omega}, t) + R_{rb}(\boldsymbol{\omega}, t)\y_R(\boldsymbol{\omega},t) = \f_h, & t \in (0,T), \\
        E_h \y_R(0) =  \y^0_h,
    \end{cases}
\end{align}
where $\y^0_h$ and $\f_h$ are the same initial condition and source term as in \eqref{eq:fem_system}, respectively.

Following \cite{MR4433122}, we can prove that $\y_R(\boldsymbol{\omega}, t)$ approximates the solution $\y_h$ of equation \eqref{eq:fem_system} in expectation as $\delta \to 0$. Specifically, setting
\begin{align}\label{eq:def_var_A}
\text{Var}[ R_{rb}] := \sum_{i=1}^{2^M} \left( \left\|R_h - \sum_{m \in S_i} \frac{R_{h,m}}{\pi_m} \right\|^2 p_i \right),
\end{align}
where $\|\cdot\|$ denotes the $l^2$-norm, the following theorem holds:

\begin{theorem}\label{Th:RBM_1}
Let $\y_h$ and $\y_R$ denote the solutions of \eqref{eq:fem_system} and \eqref{eq:random_system}, respectively. Then, for every $t \in (0,T)$, we have  
\begin{align}\label{eq:estimation_2}
    \mathbb{E}\left[\|\y_h(t) - \y_R(t)\|^2\right]\leq \left(\|E_h^{-1}R_h\|T^2 + 2T)(\| E_h^{-1}\y^0_h\| +\|E_h^{-1}\f_h\|_{L^1(0,T;\mathbb{R}^N)}\right)^2 {\frac{\text{Var}[ R_{rb}]}{\lambda_{min}^2(E_h)}} \,\delta,
\end{align}
where $\lambda_{min}(E_h)$ denotes the minimum eigenvalues of $E_h$.
\end{theorem}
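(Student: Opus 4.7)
Let $\z = \y_h - \y_R$. Subtracting \eqref{eq:random_system} from \eqref{eq:fem_system} gives
\begin{align*}
E_h \partial_t \z + R_{rb}(\bomega, t) \z = (R_{rb}(\bomega, t) - R_h) \y_h, \qquad \z(0) = \0,
\end{align*}
whose forcing has mean zero at every fixed $t$ (since $\mathbb{E}[R_{rb}] = R_h$) and is piecewise constant with independent values on the disjoint intervals $[(k-1)\delta, k\delta)$. The plan, following \cite{MR4433122}, is to exploit these two features to transfer one factor of $\delta$ to $\mathbb{E}[\|\z\|^2]$.

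First, I would establish a priori bounds on $\y_h$ and $\partial_t\y_h$. Applying Duhamel to \eqref{eq:fem_system} (after multiplying by $E_h^{-1}$) and using that $E_h^{-1}R_h$ generates a contraction semigroup in the $E_h$-inner product, one obtains $\|\y_h(t)\| \leq \|E_h^{-1}\y_h^0\| + \|E_h^{-1}\f_h\|_{L^1(0,T;\mathbb{R}^N)}$, together with a companion bound on $\partial_t\y_h$ involving $\|E_h^{-1}R_h\|$. Collected appropriately in $t$, these produce the $(\|E_h^{-1}R_h\|T^2 + 2T)(\|E_h^{-1}\y_h^0\| + \|E_h^{-1}\f_h\|_{L^1})$ factor in \eqref{eq:estimation_2}, with the $\lambda_{\min}^{-2}(E_h)$ coming from an additional $E_h^{-1}$ multiplying the random matrix via $\|E_h^{-1}\| = \lambda_{\min}^{-1}(E_h)$.

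Next, writing Duhamel's formula for $\z$ with the random evolution $\Phi_{rb}(t,s)$ generated by $-E_h^{-1}R_{rb}$, I would split the integral over the $K \sim T/\delta$ subintervals and square. On each diagonal subinterval $k=\ell$, a Taylor expansion of the deterministic part of the integrand about $(k-1)\delta$ yields a contribution of order $\delta^2\,\text{Var}[R_{rb}]\,\|\y_h\|^2$, which sums to the advertised $\delta\,\text{Var}[R_{rb}]$ scaling. On an off-diagonal pair $(k,\ell)$ with $k \neq \ell$, I would condition on $\mathcal{F}_{\min(k,\ell)-1} = \sigma(\omega_1, \dots, \omega_{\min(k,\ell)-1})$: the zero-mean of $R_{rb} - R_h$ on the later block together with its independence from the earlier one cancels the leading contribution, leaving a remainder absorbed by a discrete Gronwall inequality.

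The main obstacle is precisely this coupling: $\Phi_{rb}(t,s)$ depends on samples of $\bomega$ between $s$ and $t$, so the innovation $(R_{rb}(s) - R_h)\y_h(s)$ is not independent of the operator that propagates it forward, and a direct It\^o-isometry argument fails. The remedy is to handle the recursion for $\mathbb{E}[\|\z(k\delta)\|^2]$ stage by stage in $k$, so that at each step the newly introduced randomness is independent of the past; the resulting per-step forcing is of order $\delta^2\,\text{Var}[R_{rb}]$ plus a term linear in $\mathbb{E}[\|\z\|^2]$, and Gronwall closes the estimate to yield \eqref{eq:estimation_2}.
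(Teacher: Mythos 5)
Your proposal is correct in substance, but it proves more than the paper does: the paper does not re-derive this estimate at all. Its ``proof'' is the reduction recorded in \Cref{remark:mass_matrix}(1) --- multiply \eqref{eq:fem_system} and \eqref{eq:random_system} by $E_h^{-1}$, invoke \cite[Theorem 1]{MR4433122} (stated there for identity mass matrix), and track the constant via $\|E_h^{-1}\| = \lambda_{\min}(E_h)^{-1}$, which is exactly where the factor $\Var[R_{rb}]/\lambda_{\min}^2(E_h)$ comes from. Your reduction step is identical, and the rest of your sketch is a faithful reconstruction of the cited theorem's proof: the error equation $E_h\partial_t\z + R_{rb}\z = (R_{rb}-R_h)\y_h$ with $\z(0)=\0$ is right, the a priori bounds on $\y_h$ and $\partial_t\y_h$ are what produce the factor $(\|E_h^{-1}R_h\|T^2 + 2T)(\|E_h^{-1}\y_h^0\| + \|E_h^{-1}\f_h\|_{L^1})$, and the diagonal/off-diagonal splitting with conditioning on $\sigma(\omega_1,\dots,\omega_{k-1})$ plus discrete Gronwall is precisely how the coupling between the random propagator and the innovation is handled in \cite{MR4433122}. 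So what you gain is a self-contained argument where the paper imports a black box; what you should still tighten are two points that the citation quietly supplies. First, contractivity of $e^{-tE_h^{-1}R_h}$ holds a priori in the $E_h$-inner product, not the Euclidean one; passing to the Euclidean bound you state either uses that $E_h$ and $R_h$ commute here (both are polynomials in the same tridiagonal Toeplitz matrix, so $E_h^{-1}R_h$ is symmetric positive semidefinite) or costs a factor $\sqrt{\lambda_{\max}(E_h)/\lambda_{\min}(E_h)} \le 2$. Second, your Gronwall closure needs a bound on the random propagator $\Phi_{rb}$ that is uniform in $\bomega$ and $\delta$; this requires each batch matrix $\sum_{m\in S_i} R_{h,m}/\pi_m$ to be dissipative (e.g.\ positive semidefinite), which holds for the block decompositions actually used in \Cref{sec:numerical} but is not implied by \eqref{eq:decom_A} alone --- without it, the constant in \eqref{eq:estimation_2} would pick up an exponential factor of size roughly $e^{C\|E_h^{-1}R_{rb}\|\delta}$ per step, ruining the $h$-dependence that \Cref{th:convergence_RBM_FE} later relies on. Stating that hypothesis explicitly would make your proof complete and would also make the theorem's implicit assumptions visible.
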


\bigbreak
\begin{remark}\label{remark:mass_matrix} 
Regarding the previous theorem, the following comments are in order:
\begin{enumerate}
\item[1.] 
     In \cite[Theorem 1]{MR4433122}, the matrix $E_h$ is considered as the identity matrix. However, since $E_h$ is invertible, by multiplying \eqref{eq:random_system} by $E_h^{-1}$, and considering that $\|E_h^{-1}\|=(\lambda_{\min}(E_h))^{-1}$, estimation \eqref{eq:estimation_2} follows.

\item[2.] In this approach, we choose to keep the matrix $E_h$ in \eqref{eq:fem_system} instead of multiplying the system by $E^{-1}_h$, which would yield an equivalent system with the dynamics matrix $E^{-1}_h R_h$. Therefore, \Cref{Th:RBM_1} could be applied in this setting as well. However, note that, in the first case, namely, in the setting we have chosen, the matrix to be decomposed is $R_h$, whereas in the second one, it would be $E^{-1}_h R_h$. Our primary motivation for the first choice, decomposing $R_h$, is to keep the analogy with the RBM for PDEs on graphs. When dealing with the discretization of PDEs on graphs, both $R_h$ and $E_h$ exhibit a block diagonal structure, each block corresponding to the dynamics on a subgraph (see \Cref{subsec:numerics_equation}). Therefore, leveraging domain decomposition, a natural decomposition of $R_h$ can be achieved by utilizing submatrices associated to each block. In contrast, when we chose the second approach, due to the presence of the matrix $E^{-1}_h R_h$, it results in a dense matrix that lacks any block-diagonal structure or clear domain-based interpretation.  Moreover, as described in \Cref{remark:computational_efficiency}, in our approach, decomposing $R_h$ leads to an algorithm in which it is only necessary to solve a PDE on a subgraph at each RBM step.

    \item[3.] According to \cite[Theorem 2]{MR4433122}, \Cref{Th:RBM_1} still holds when we replace ${\f_h}$ with a random counterpart $\f_{rb}(\bomega)$ in the random system \eqref{eq:fem_system}, provided that $\E[\f_{rb} ]= \f_h$.

\end{enumerate}
\end{remark}
In \cite{MR4433122}, the Random Batch Method (RBM) was also adapted to address optimal control problems.

Specifically, let $Q,\,D \in \R^{N\times N}$ be two symmetric and positive definite matrices, and let $\y_d\in \R^N$ be a given target. Denote by $\| \boldsymbol{v} \|_{Q}^2 := \langle \boldsymbol{v}, Q \boldsymbol{v} \rangle$ and $\|\boldsymbol{v}\|_D^2 := \langle \boldsymbol{v}, D \boldsymbol{v} \rangle$ for $\boldsymbol{v} \in \R^N$, and consider the optimal control problem
\begin{align}\label{eq:dis_optimal_control}
    \min_{{\f_h} \in L^2(0,T;\mathbb{R}^N)} \left\{ J_{D,Q}({\f_h})= \frac{1}{2} \int_0^T \left( \| {\f_h}(t)\|^2_{D} + \|\y_h(t) - \y_d\|^2_{Q} \right) dt\right\},
\end{align}
where $\y_h$ is the solution of \eqref{eq:fem_system} with the right-hand side ${\f_h}$. A unique optimal pair $(\y_h,{\f_h})$ solving \eqref{eq:dis_optimal_control} exists (\cite[Section 1.4]{MR2583281}). 

Additionally, consider the $\bomega$-dependent optimal control problem
\begin{align}\label{eq:random_optimal_control}
    \min_{{\f_R}(\bomega) \in L^2(0,T;\mathbb{R}^N)} \left\{ J^R_{D,Q}({\f_R}(\bomega))= \frac{1}{2} \int_0^T \left( \| {\f_R}(t)\|^2_{D} + \|\y_R(t) - \y_d\|^2_{Q} \right) dt\right\},
\end{align}
where $\y_R$ is the solution of the random dynamics \eqref{eq:random_system} with the right-hand side given by ${\f_R}(\bomega)$. For each realization of $\bomega$, a unique optimal pair $(\y_R(\bomega),\f_R(\bomega))$ exists.

 From \cite[Theorem 4]{MR4433122}, the following holds:
\begin{theorem}\label{th:rbm_optimal_control}
Let $(\y_h, {\f_h})$ and $(\y_R, {\f_R})$ denote the optimal pairs of the optimal control problems \eqref{eq:dis_optimal_control} and \eqref{eq:random_optimal_control}, respectively. Then, the following inequality holds:
\begin{align}\label{eq:estimation_oc_fin}
   \E[ \|{\f_h}(t) - {\f_R}(t)\|^2] + \E[\|\y_h(t) - \y_R(t)\|^2]\leq C_{oc}(1+\|E^{-1}_h\|^2T) {\frac{\text{Var}[ R_{rb}]}{\lambda_{min}^2(D)\lambda_{min}^2(E_h)} }\delta,
\end{align}
for every $t \in (0,T)$, where $ \lambda_{min}(D)$ and $\lambda_{min}(E_h)$ denotes the smaller eigenvalues of $D$ and $E_h$ respectively. The constant $C_{oc}>0$ in \eqref{eq:estimation_oc_fin} (whose subscript $oc$ refers to ``optimal control") is given by 
\begin{align}\label{eq:constant_C_oc}
    C_{oc}= 2\|D\|^2  \left((1+T)(\|E^{-1}_h \y^0\| + \|E^{-1}_h \f_h\|)^2 + \|\y_d\|^2 \right)\|E^{-1}_h\|^2
    (\|E^{-1}_hR_h\|T^2+2T).
\end{align}
\end{theorem}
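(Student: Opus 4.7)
The plan is to combine the first-order optimality conditions for both problems with \Cref{Th:RBM_1}, using the strong convexity of the quadratic cost functional to transfer estimates between the adjoint state and the control. I would begin by writing the optimality system for \eqref{eq:dis_optimal_control}: the unique minimizer satisfies $\f_h = -D^{-1}\bm{p}_h$, where $\bm{p}_h$ solves the backward adjoint equation $-E_h \pt \bm{p}_h + R_h \bm{p}_h = Q(\y_h-\y_d)$ with terminal condition $E_h\bm{p}_h(T) = 0$. Since the block decomposition \eqref{eq:decom_A} produces symmetric sub-matrices, $R_{rb}(\bomega,t)$ is symmetric, so the analogous random optimality system yields $\f_R = -D^{-1}\bm{p}_R$, with $\bm{p}_R$ driven by $\y_R-\y_d$ through the random dynamics in reversed time.

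Next, strong convexity of $J_{D,Q}$ with modulus at least $\lambda_{min}(D)$ gives
\begin{align*}
\lambda_{min}(D)\,\|\f_h-\f_R\|^2_{L^2(0,T;\R^N)}\leq \langle \nabla J_{D,Q}(\f_R)-\nabla J_{D,Q}(\f_h),\,\f_h-\f_R\rangle.
\end{align*}
Since $\nabla J_{D,Q}(\f_h)=0$ and $\nabla J_{D,Q}(\f_R)=D\f_R+\widetilde{\bm{p}}$, where $\widetilde{\bm{p}}$ is the \emph{deterministic} adjoint driven by the state $\widetilde{\y}$ that \eqref{eq:fem_system} produces from the control $\f_R$, the right-hand side reduces to $\langle \widetilde{\bm{p}}-\bm{p}_R,\,\f_h-\f_R\rangle$ after using $D\f_R+\bm{p}_R=0$. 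This discrepancy $\widetilde{\bm{p}}-\bm{p}_R$ is then split through an intermediate adjoint and controlled by two invocations of \Cref{Th:RBM_1}: one to bound $\|\widetilde{\y}-\y_R\|$ from the forward state equation with shared source $\f_R$, and another to bound the adjoint difference in reversed time (the remaining source discrepancy $Q(\widetilde{\y}-\y_R)$ being absorbed via a Duhamel--Gr\"onwall step). Combining these bounds via Cauchy--Schwarz and absorbing the resulting $\|\f_h-\f_R\|^2$ factor by strong convexity closes the estimate on the control; the state error $\|\y_h-\y_R\|$ is then handled by triangle inequality with $\widetilde{\y}$.

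The main obstacle is the careful bookkeeping of constants so that \eqref{eq:constant_C_oc} and the denominators $\lambda_{min}^2(D)\lambda_{min}^2(E_h)$ emerge exactly as stated. In particular, applying \Cref{Th:RBM_1} to the backward adjoint requires expressing its terminal data and source in terms of the original data $(\y^0_h,\f_h,\y_d)$, which introduces the factor $\|\y_d\|^2$ in $C_{oc}$; each inversion of $E_h$ or $D$ in the optimality coupling contributes a factor of $\|E_h^{-1}\|=\lambda_{min}(E_h)^{-1}$ or $\|D^{-1}\|=\lambda_{min}(D)^{-1}$, explaining the quartic denominator; and the $\|E_h^{-1}R_h\|T^2+2T$ factor in $C_{oc}$ is inherited directly from \Cref{Th:RBM_1}. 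A secondary technical point is verifying that $R_{rb}$ remains symmetric, so that the backward random adjoint enjoys the same structural estimates as the forward one and \Cref{Th:RBM_1} can be reused in reversed time without modification.
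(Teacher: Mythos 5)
You should know at the outset that the paper does not prove \Cref{th:rbm_optimal_control} at all: it imports the statement from \cite[Theorem 4]{MR4433122}, remarking only that $C_{oc}$ is obtained by tracking the constants of that reference. Your sketch must therefore be judged against the cited argument, whose architecture -- optimality systems, strong convexity of the quadratic cost with modulus $\lambda_{\min}(D)$, and reduction of the control discrepancy to an adjoint discrepancy via $\nabla J_{D,Q}(\f_R)=\widetilde{\bm{p}}-\bm{p}_R$ -- you reproduce correctly in outline. Two small repairs first: the monotonicity pairing should read $\langle \nabla J_{D,Q}(\f_R)-\nabla J_{D,Q}(\f_h),\,\f_R-\f_h\rangle$ (you paired with $\f_h-\f_R$, which flips the sign), and your symmetry hypothesis on the submatrices is neither guaranteed by \eqref{eq:decom_A} nor needed: the random adjoint involves $R_{rb}^{\top}$, and since $R_h^{\top}=R_h$ and $\|A^{\top}\|=\|A\|$, the transposed family $\{R_{h,m}^{\top}\}$ is an admissible decomposition with the same $\Var[R_{rb}]$, so the backward dynamics is covered without symmetry.

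The genuine gap is the double black-box invocation of \Cref{Th:RBM_1}. That theorem (and its extension in \Cref{remark:mass_matrix}, item 3) applies to deterministic sources, or to RBM-structured random sources with $\E[\f_{rb}]=\f_h$ whose value on $[(k-1)\delta,k\delta)$ depends only on $\omega_k$. The inputs in your two invocations are neither: the random optimal pair solves a coupled forward--backward system over the whole horizon, so $\f_R(\bomega)$ restricted to any subinterval depends on \emph{all} of $\bomega$, including future switching variables, and the adjoint sources $Q(\y_R-\y_d)$ and $Q(\widetilde{\y}-\y_d)$ inherit this anticipativity. The $O(\delta)$ rate in \Cref{Th:RBM_1} rests on the mean-zero cancellation $\E[R_{rb}(t)-R_h]=0$ paired with quantities measurable with respect to the past of the switching process; with anticipative inputs those cross terms no longer vanish, and the fallback pathwise Gr\"onwall bound is useless because $\|R_{rb}-R_h\|$ is $O(1)$ pathwise, not small. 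Closing this hole is precisely the technical content of \cite{MR4433122}: one needs uniform-in-$\bomega$ a priori bounds on $(\y_R,\f_R)$ (e.g.\ from $J^R_{D,Q}(\f_R)\le J^R_{D,Q}(\0)$) combined with RBM estimates at the level of the fundamental solution $\Phi_R(t,s)$ -- deterministic data, so the cancellation mechanism applies -- which then act on the random control via Cauchy--Schwarz; equivalently, the two-sided convexity comparison $\lambda_{\min}(D)\,\E\|\f_h-\f_R\|_{L^2(0,T;\R^N)}^2\le \E[J^R_{D,Q}(\f_h)-J_{D,Q}(\f_h)]+\E[J_{D,Q}(\f_R)-J^R_{D,Q}(\f_R)]$ isolates the anticipative term in one place but still requires such an operator-level estimate. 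Finally, note that the theorem asserts a bound on $\E\|\f_h(t)-\f_R(t)\|^2$ for \emph{every} $t\in(0,T)$, whereas your strong-convexity argument only controls the $L^2(0,T)$ norm; recovering the pointwise statement forces you back to $\f_h(t)-\f_R(t)=-D^{-1}\bigl(\bm{p}_h(t)-\bm{p}_R(t)\bigr)$ and pointwise adjoint estimates, a bootstrap your sketch does not supply.
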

Here, the constant $C_{oc}$ in \eqref{eq:constant_C_oc} is obtained after carefully tracking the proofs in \cite{MR4433122}.

\section{Approximation by the Random Batch Method}\label{sec:main_results}

\subsection{Approximation of the Heat Equation via the RBM}
Let us consider the basis functions $\{\phi_j\}_{j=1}^N$ of the space $V_{h}$ introduced in \Cref{section_FEM}. Let $\y_R(\bomega, t) \in \R^N$ be the solution of \eqref{eq:random_system}, and denote by
\begin{align}\label{eq:definition_yR}
y_R(x,t,\bomega) := \sum_{j=1}^N(\y_R)_j(\bomega,t)\phi_j(x),
\end{align}
where $(\y_R)_j$ denotes the $j$-th coordinate of $\y_R(\bomega, t)$ on the basis of $V_{h}$. 

In the following, to simplify the presentation,  we assume that the matrices $\{R_{h,m}\}_{m=1}^M$ are such that $R_{h,m} = R_m / h$ for all $m \in \{1, \dots, M\}$, where each $R_m$ is an $N \times N$ matrix whose entries are independent of $h$. This is the structure of the matrix $R_h$ itself. In this setting, the following result holds: 

\begin{theorem}\label{th:convergence_RBM_FE}
Let $y_R$ be as in \eqref{eq:definition_yR} and let $y$ be the solution of \eqref{eq:heat_eq} with initial condition $y^0 \in H^1_0(0,L)$ and $f \in L^2(0,T;L^2(0,L))$. Then, for $h < \sqrt{6T}$ given, we have
\begin{align}\label{eq:estim_app_heat}
\E[\|y_R(t) - y(t)\|_{L^2(0,L)}^2] \leq C\left(h^4 + \frac{\delta}{h^7}C(M)\right),
\end{align}
for every $t \in (0,T)$, where $C$ and $C(M)$ are two positive constants independent of $h$ and $\delta$, with $C(M)$ depending on the chosen decomposition and $\delta$ being the RBM parameter introduced in \Cref{sec:rbm}.
\end{theorem}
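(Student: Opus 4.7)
The natural strategy is to use the triangle inequality to split
\begin{equation*}
\E[\|y_R(t)-y(t)\|_{L^2(0,L)}^2] \;\leq\; 2\,\E[\|y_R(t)-y_h(t)\|_{L^2(0,L)}^2] + 2\,\|y_h(t)-y(t)\|_{L^2(0,L)}^2,
\end{equation*}
and to handle the two pieces independently. The deterministic FEM error is controlled directly by \Cref{th:fin_elem1}, producing the $h^4$ contribution in \eqref{eq:estim_app_heat}.

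For the stochastic term, since $y_R$ and $y_h$ both live in $V_h$ and share the basis $\{\phi_j\}$, one converts the $L^2(0,L)$ function norm to the $\ell^2$ norm of the coefficient vector by
\begin{equation*}
\|y_R - y_h\|_{L^2(0,L)}^2 \;=\; \langle E_h(\y_R - \y_h),\,\y_R - \y_h\rangle \;\leq\; \|E_h\|\,\|\y_R - \y_h\|^2,
\end{equation*}
and the explicit tridiagonal form \eqref{eq:stiffness_mass_matrices} gives both $\|E_h\| \leq h$ and $\lambda_{\min}(E_h) \geq h/3$. After taking expectations, \Cref{Th:RBM_1} then supplies the required bound on $\E[\|\y_h - \y_R\|^2]$.

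The technical core of the argument is to track the $h$-dependence of every factor appearing in \eqref{eq:estimation_2}. Under the assumption $R_{h,m}=R_m/h$ with $R_m$ independent of $h$, one has $\|R_h\| \leq C/h$ and $\text{Var}[R_{rb}] \leq C(M)/h^2$, where $C(M)$ absorbs the decomposition-dependent constants; combined with $\|E_h^{-1}\| \leq 3/h$, this yields $\|E_h^{-1}R_h\| \leq C/h^2$ and $\text{Var}[R_{rb}]/\lambda_{\min}^2(E_h) \leq C(M)/h^4$. The projected data satisfy $\|\y^0_h\| \leq C\sqrt{h}$ and $\|\f_h(t)\| \leq C\|f(t)\|_{L^2}$, so $\|E_h^{-1}\y^0_h\| \leq C/\sqrt{h}$ and $\|E_h^{-1}\f_h\|_{L^1} \leq CT/h$; the forcing term therefore dominates, and the squared data factor is $O(1/h^2)$. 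The restriction $h < \sqrt{6T}$ is exactly what makes $\|E_h^{-1}R_h\|T^2 \geq 2T$, so the prefactor $\|E_h^{-1}R_h\|T^2+2T$ is bounded by $CT^2/h^2$. Multiplying the four factors gives $\E[\|\y_h - \y_R\|^2] \leq C(M)\delta/h^8$, and the final $\|E_h\| \leq h$ from the function-to-coefficient conversion returns the claimed $\delta/h^7$ contribution.

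The main obstacle I anticipate is precisely this bookkeeping: one must separately track the constants coming from $E_h^{-1}$, from the scaling $R_{h,m}=R_m/h$, and from the projections of the data, and identify the forcing term (rather than the initial datum) as the source of the dominant negative power of $h$. The parameter $M$ enters only through $\text{Var}[R_{rb}]$ via the constant $C(M)$, and no finer structural information about the chosen block decomposition is used at this stage.
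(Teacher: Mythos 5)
Your proposal is correct and follows essentially the same route as the paper's proof: triangle inequality plus \Cref{th:fin_elem1} for the $h^4$ term, conversion of the $L^2(0,L)$ norm to the coefficient norm via the mass matrix (an $O(h)$ factor), and then \Cref{Th:RBM_1} with exactly the same $h$-bookkeeping ($\mathrm{Var}[R_{rb}]\leq C(M)/h^2$, $\lambda_{\min}(E_h)\geq h/3$, data terms of order $1/h$, prefactor $24T^2/h^2$ via $h<\sqrt{6T}$), yielding $\E[\|\y_h-\y_R\|^2]\leq C(M)\delta/h^8$ and hence $\delta/h^7$ after the $O(h)$ conversion. Your bound $\|\y^0_h\|\leq C\sqrt{h}$ is in fact slightly sharper than the paper's $O(1)$ estimate (the forcing term dominates either way), and your reading of the condition $h<\sqrt{6T}$ as allowing $2T$ to be absorbed into $CT^2/h^2$ is precisely how the paper uses it.
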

\begin{remark}
    To ensure the convergence of the error in \eqref{eq:estim_app_heat}, it is sufficient for $\delta$ to be small relative to $h$. In particular, taking $\delta = h^{7+\alpha}$, for some $\alpha>0$, we deduce a convergence of order $h^\alpha$. This is illustrated in \Cref{sec:overlapping_dec}.
\end{remark}

\begin{proof}
Given  that $R_h=R/h$ and $R_{h,m} = R_m / h$ with $R$ and $R_m$ being $N \times N$ matrices with entries independent of $h$, we have
\begin{align}\label{eq:estimation_var_a}
    \text{Var}[ R_{rb}]=\sum_{i=1}^{2^M} \left( \left\|R_h - \sum_{m \in S_i} \frac{R_{h,m}}{\pi_m} \right\|^2 p_i \right) \leq \frac{1}{h^2} \sum_{i=1}^{2^M} \left( \left\|R - \sum_{m \in S_i} \frac{R_m}{\pi_m} \right\|^2 p_i \right) = \frac{C(M)}{h^2},
\end{align}
where the constant $C(M)$ is independent of $h$. On the other hand, since $E_h$ is a tridiagonal Toeplitz matrix, its spectrum can be computed explicitly and, according to \cite{tridiagonal}, its smallest eigenvalue is given by 
\begin{align}\label{eq:min_eigen_E}
    \lambda_{\min}(E_h) = h\left(\frac{2}{3} + \frac{1}{3}\cos\left(\frac{N\pi}{N+1}\right)\right) > \frac{h}{3}.
\end{align}
Now observe that, since $(\f_h)_j = (f, \phi_j)_{L^2(0,L)}$, we have 
\begin{align*}
    \|\f_h\| &= \left(\sum_{j=1}^N |(f,\phi_j)_{L^2(0,L)}|^2\right)^{1/2}\leq \|f\|_{L^2(0,L)} \left( \sum_{j=1}^N \|\phi_j\|_{L^2(0,L)}^2 \right)^{1/2} \leq \|f\|_{L^2(0,L)} \left(\frac{2}{3}(L-h) \right)^{1/2}.
\end{align*}
Here, we have used that $\|\phi_j\|_{L^2(0,L)}^2 = 2h/3$ for every $j \in \{1,\dots,N\}$ and that $Nh=L-h$. Since $h < L$ and $\|E_h^{-1}\|=1/\lambda_{\min}(E_h)$, we obtain 
\begin{align}\label{eq:estimation_Ef}
    \|E_h^{-1} \f_h\| \leq  \frac{3L^{1/2}\|f\|_{L^2(0,L)}}{h},
\end{align}
and, similarly, 
\begin{align}\label{eq:estimation_y0}\|E_h^{-1}\y^0_h\| \leq \frac{3L^{1/2}\|y^0\|_{L^2(0,L)}}{h}.
\end{align}
Moreover, since  $\|R_h\| \leq 4/h$ and $h < \sqrt{6T}$, we obtain 
\begin{align}\label{eq:estimation_Ah_T}
(\|E^{-1}_hR_h\|T^2 + 2T) \leq \frac{24T^2}{h^2}.
\end{align}
Estimate \eqref{eq:estim_app_heat} can now be established with the aid of  $y_h$, solution of   \eqref{eq:def_yh_fem}. Applying \Cref{th:fin_elem1}, we deduce that
\begin{align}\label{eq:estimation_teo32}
\nonumber \|y_R(t,\bomega) - y(t)\|_{L^2(0,L)}^2 &\leq 2\left(\|y_R(t,\bomega) - y_h(t)\|_{L^2(0,L)}^2 + \|y_h(t) - y(t)\|_{L^2(0,L)}^2 \right)\\
     & \leq \frac{4h}{3} \|\y_R(t,\bomega) - \y_h(t)\|^2  + Ch^4.
\end{align}
Taking the expectation in \eqref{eq:estimation_teo32},  applying \Cref{Th:RBM_1} and  estimates \eqref{eq:estimation_var_a}, \eqref{eq:min_eigen_E},  \eqref{eq:estimation_Ef}, \eqref{eq:estimation_y0}, and \eqref{eq:estimation_Ah_T} the proof is concluded.
\end{proof}

\begin{remark}\label{remark:conv_ht_hx}
Several remarks are in order:
\begin{enumerate}
    \item[1.] The factor $h^7$ dividing in \eqref{eq:estim_app_heat} arises due to the fact that the right-hand side of \eqref{eq:estimation_2} depends on the $l^2$-norm of the matrices $R_h$ and $E_h^{-1}$, and $1/\lambda_{\min}(E_h)$, where each is of the order $1/h$. Also $Var[R_{rb}]$ depends on $1/h^2$. This leads us to the dividing factor $h^7$.

    \item[2.] Note that when $h$ approaches zero, the discretized PDE becomes closer to the original PDE; however, if $\delta$ is fixed and $h$ decreases, the RBM departs from the discretized PDE since the size of the problem increases. The condition $\delta=O(h^7)$ is needed to ensure that the RBM remains close to the discretized PDE and, therefore, to the original PDE when $h$ approaches zero.

    \item[3.] The multiplicative constant $C(M)$ in \eqref{eq:estimation_var_a} plays a relevant role in the error estimate  \eqref{eq:estim_app_heat}, since it amplifies the error term   $\delta/h^7$. Consequently, minimizing $C(M)$ is a relevant objective in our setting, as it directly impacts the overall efficiency of the method. Moreover, the value of $C(M)$ is closely linked to the way the matrix $R_h$ is decomposed -- an aspect we further elaborate on in \Cref{sec:c_and_o}. In particular, when $M = 1$, the RBM reduces to the standard FEM approximation, and in this case we have $C(M) = 0$.

    \item[4.] By Theorem \ref{th:convergence_RBM_FE} we have that the variance 
    \begin{align*}
        \mathbb{V}[\|y_R(t)-y(t)\|]:=\mathbb{E}[\|y_R(t)-y(t)\|^2] -\left(\mathbb{E}[\|y_R(t)-y(t)\|]\right)^2\leq C\left(h^4 + \frac{\delta}{h^7}C(M)\right).
    \end{align*}
    Moreover, Markov’s inequality yields, for any $\varepsilon>0$,
    \begin{align*}
        \mathbb{P}(\|y_R(t)-y(t)\|^2>\varepsilon)\leq \frac{C}{\varepsilon}\left(h^4 + \frac{\delta}{h^7}C(M)\right).
    \end{align*}
    Therefore, whenever $C\left(h^4 + \delta/h^7C(M)\right)$ is small, not only the mean‐square error but also the variance and the probability of large deviations are uniformly small. In this regime, a single realization of the scheme suffices to approximate the expected error with high probability.

    \item[5.] The analysis in Theorems \ref{th:convergence_RBM_FE} and \ref{th:conver_optimal_control} (below) can be adapted and applied to the finite difference method (FDM). However, obtaining estimates similar to \eqref{eq:estimation_finite_elements} with the FDM, requires smoother data to ensure that $y \in L^2(0,T;C^4(0,L))$ (see, for instance,  \cite[Theorem 16.5]{MR2478556}, where the convergence error for the FDM depends on $\|\partial_x^4 y\|_{L^2(0,T;C(0,L))}$).
\end{enumerate}
\end{remark}

\subsection{Application to Optimal Control}\label{eq:discrete_ocp}
In this section, we prove the convergence of the RBM in the context of optimal control for the heat equation; we employ a discrete approach. Namely, we first discretize the PDE to later apply the RBM. Let $y_d \in H_0^1(0,L)$ be a given target, and consider the optimal control problem 
\begin{align}\label{eq:optimal_control}
      \min_{f \in L^2(0,T;L^2(0,L))} \left\{ J(f)= \frac{1}{2} \int_0^T \left( \|f(t)\|^2_{L^2(0,L)} + \|y(t) - y_d\|_{L^2(0,L)}^2 \right) dt\right\},
\end{align}
where  $y$ is the solution of \eqref{eq:heat_eq} with right-hand side $f$. According to \cite[Chapter 3]{MR2583281}, there exists a unique optimal pair $(y, f)$ for \eqref{eq:optimal_control}. To approximate the optimal control problem \eqref{eq:optimal_control}, we can consider the finite-dimensional problem 
\begin{align}\label{eq:finite_dim_ffunctional}
  \min_{f_h\in L^2(0,T;L^2(0,L))} \left\{ J_h(f_h)= \int_0^T \left(\|f_h(t)\|^2_{L^2(0,L)}+\|y_h(t)-y_d \|^2_{L^2(0,L)} \right) dt \right\},
\end{align}
where $y_h$ is the solution of \eqref{eq:variational_fem} with right-hand side $f_h$. The convergence of the FEM in this context, is assured by the following result in \cite{MR1119274}.

\begin{theorem}\label{th:conver_optimal_control} 
    Let $(y,f)$ denote the optimal pair of \eqref{eq:optimal_control} with $y^0,\,y_d\in H_0^1(0,L)$, and let $(y_h,f_h)$ denote the optimal pair of \eqref{eq:finite_dim_ffunctional}. Then, there exists a constant $C>0$, independent of $h>0$, such that 
    \begin{align*}
        \|y_h(t)-y(t)\|_{L^2(0,L)}^2 + \|f_h(t)-f(t)\|_{L^2(0,L)}^2 \leq C h^4 \left(\frac{T}{t(T-t)} + \ln\left(\frac{1}{h}\right)\right)^2,
    \end{align*}
    for every $t \in (0,T)$.
\end{theorem}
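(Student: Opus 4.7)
The plan is to exploit the first-order optimality systems of both problems. For \eqref{eq:optimal_control}, the optimal pair $(y,f)$ is characterized by the state equation \eqref{eq:heat_eq}, the backward adjoint equation
\[
-\partial_t p - \partial_{xx} p = y - y_d, \qquad p(T)=0, \qquad p(0,t)=p(L,t)=0,
\]
and the Euler identity $f = -p$ in $L^2((0,T)\times(0,L))$. An analogous semi-discrete optimality system holds for $(y_h,f_h,p_h)$ with $y_h,p_h\in V_h$ and $f_h=-p_h$; in particular, although the minimization in \eqref{eq:finite_dim_ffunctional} is formally over $L^2(0,T;L^2(0,L))$, the optimal discrete control $f_h$ actually lies in $V_h$ through this identity. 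Using $f-f_h = -(p-p_h)$, the bound on $\|y-y_h\|^2+\|f-f_h\|^2$ reduces to a joint bound on $\|y-y_h\|^2+\|p-p_h\|^2$.

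Next, I would introduce an intermediate pair $(\tilde y_h,\tilde p_h)\in V_h\times V_h$ obtained as the pure Galerkin approximations of the continuous state $y$ and adjoint $p$, and split the errors as $y-y_h=(y-\tilde y_h)+(\tilde y_h-y_h)$ and $p-p_h=(p-\tilde p_h)+(\tilde p_h-p_h)$. The differences $(\tilde y_h-y_h,\tilde p_h-p_h)$ satisfy a coupled semi-discrete parabolic system with forcings $f-f_h$ on the state side and $y-y_h$ on the adjoint side. Testing these equations against $\tilde y_h-y_h$ and $\tilde p_h-p_h$ respectively, integrating in time, and using $f-f_h=-(p-p_h)$ produces cancellations of the coupling terms and yields
\[
\|\tilde y_h-y_h\|^2+\|\tilde p_h-p_h\|^2 \lesssim \|y-\tilde y_h\|^2+\|p-\tilde p_h\|^2,
\]
so that the full error is controlled by deterministic FEM errors for two parabolic problems driven by the continuous optimal data.

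Applying \Cref{th:fin_elem1} and its straightforward backward-in-time analogue to the right-hand side produces the leading $O(h^4)$ factor. The sharper singular weight $T/(t(T-t))$ and the logarithmic factor $\ln(1/h)$ in the statement arise from parabolic smoothing: for $y^0\in H_0^1$ the state $y$ gains regularity at rate $\|y(t)\|_{H^2}\lesssim t^{-1/2}\|y^0\|_{H_0^1}$, and the same holds symmetrically for the adjoint near $t=T$ since $p(T)=0$ and $y-y_d\in H_0^1$. The $\ln(1/h)$ factor is the classical price paid to convert an $L^2$-in-space bound into a pointwise-in-time estimate via an inverse inequality on $V_h$.

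The main obstacle is the precise derivation of the singular weight $T/(t(T-t))$: it requires weighted energy estimates distinguishing the interior of $(0,T)$ from the two boundary layers near $t=0$ and $t=T$, together with a duality argument between the state and adjoint equations to transfer regularity across them. These are the Wheeler-type techniques for parabolic FEM developed in \cite{MR1119274}, whose analysis I would take as a black box after verifying that our formulation satisfies the hypotheses therein.
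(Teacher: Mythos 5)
Your proposal is correct in outline, but note that the paper does not prove this theorem at all: it is quoted directly from \cite{MR1119274} (``The convergence of the FEM in this context, is assured by the following result in \cite{MR1119274}''), so in effect you have reconstructed a proof where the paper supplies only a citation. Your reduction --- the optimality systems $f=-p$ and $f_h=-p_h$, the observation that the optimal discrete control lies in $V_h$, the Galerkin-projection splitting, and the duality cancellation giving $\int_0^T\left(\|\tilde y_h-y_h\|^2+\|\tilde p_h-p_h\|^2\right)dt \lesssim \int_0^T\left(\|y-\tilde y_h\|^2+\|p-\tilde p_h\|^2\right)dt$ --- is the standard argument and is sound, but it only yields time-integrated bounds, whereas the theorem is pointwise in $t$ with the singular weight $T/(t(T-t))$; you correctly delegate exactly this hard part to \cite{MR1119274}, so your proof ends precisely where the paper's citation begins, and the two routes coincide at the crux. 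One caveat on your heuristics: the paper's own explanation (third remark after \Cref{th:op_random_approximation}) attributes the $\ln(1/h)$ factor not to an inverse inequality on $V_h$, as you suggest, but to Duhamel's formula combined with a nonsmooth-data semigroup error estimate carrying a $(t-s)^{-1}$ singularity, which produces the logarithm when the time integral is split; the same singular estimate, applied near $t=0$ for the state and near $t=T$ for the adjoint, is what generates the weight $T/(t(T-t))$ --- your smoothing rate $\|y(t)\|_{H^2}\lesssim t^{-1/2}\|y^0\|_{H^1_0}$ would only account for an $h^4/t$ behavior, not the $h^4/t^2$ behavior of the squared bound in the statement. Since you take \cite{MR1119274} as a black box anyway, these mis-attributions are not load-bearing, and your outline is an acceptable (indeed more informative) substitute for the paper's bare citation.
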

Denote by $(\y_h, \f_h)$ the vectors of coefficients of $(y_h, f_h)$. Observe that 
\begin{align}\label{eq:characterization_norm_l2}
    \|y_h(t)\|^2_{L^2(0,L)} = \left\|\sum_{j=1}^N y_j(t) \phi_j\right\|^2_{L^2(0,L)} = \|\y_h\|^2_{E_h} = \langle E_h \y_h, \y_h \rangle,
\end{align}
where $E_h$ corresponds to the mass matrix introduced in \eqref{eq:stiffness_mass_matrices}. Therefore, when $D = Q = E_h$, then $(\y_h, \f_h)$ corresponds to the optimal pair of the finite-dimensional problem \eqref{eq:dis_optimal_control} restricted to the system \eqref{eq:fem_system}, with $\y_d$ taken as the vector of coefficients of the projection of $y_d$ onto $V_h$. 

Let us denote by $(\y_R, \f_R)$ the optimal pair of the random optimal control problem \eqref{eq:random_optimal_control} restricted to \eqref{eq:random_system}, and denote by $(y_R, f_R)$ its projection onto $V_h$. Then, we have the following theorem.
\begin{theorem}\label{th:op_random_approximation}
    Let $(y, f)$ denote the optimal pair of \eqref{eq:optimal_control} with $y^0,\,y_d \in H_0^1(0,L)$, and let $(y_R, f_R)$ be as given above. Then, there exists a constant $C > 0$, independent of $h$ and $\delta$, such that 
    \begin{align}\label{eq:estim_opt_control_heat_rand}
        \E\left[\|y_R(t) - y(t)\|_{L^2(0,L)}^2\right] + \E\left[\|f_R(t) - f(t)\|_{L^2(0,L)}^2\right] \leq
        C\left(\frac{\delta}{h^{11}} C(M) + h^4 \left(\frac{T}{t(T-t)} + \ln\left(\frac{1}{h}\right)\right)^2\right),
    \end{align}
    for every $t \in (0,T)$, where $C(M)$ is the same constant as in \Cref{th:convergence_RBM_FE}.
\end{theorem}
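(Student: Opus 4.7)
The plan is to combine the deterministic FEM optimal control estimate in \Cref{th:conver_optimal_control} with the abstract RBM optimal control estimate in \Cref{th:rbm_optimal_control}, bridging the two with the identity \eqref{eq:characterization_norm_l2}. Splitting by the triangle inequality,
\begin{equation*}
\|y_R(t)-y(t)\|_{L^2(0,L)}^2 \leq 2\|y_R(t)-y_h(t)\|_{L^2(0,L)}^2 + 2\|y_h(t)-y(t)\|_{L^2(0,L)}^2,
\end{equation*}
and analogously for $\|f_R(t)-f(t)\|_{L^2(0,L)}^2$, the second group of terms is directly controlled by \Cref{th:conver_optimal_control}, contributing the logarithmic $h^4$ term in \eqref{eq:estim_opt_control_heat_rand}. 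Thanks to \eqref{eq:characterization_norm_l2}, choosing $D=Q=E_h$ in \eqref{eq:dis_optimal_control}--\eqref{eq:random_optimal_control} makes those abstract problems coincide with the FEM formulation of the optimal control problem and its randomized counterpart. Consequently, the coefficient vectors of $(y_h,f_h)$ and $(y_R,f_R)$ are the unique optimal pairs $(\y_h,\f_h)$ and $(\y_R,\f_R)$ of \eqref{eq:dis_optimal_control} and \eqref{eq:random_optimal_control}, respectively, so \Cref{th:rbm_optimal_control} applies to control the first group of terms.

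The core of the argument is then to track the $h$-dependence of every factor appearing in \eqref{eq:estimation_oc_fin}, using the same inputs already exploited in the proof of \Cref{th:convergence_RBM_FE}: the variance bound \eqref{eq:estimation_var_a} giving $\operatorname{Var}[R_{rb}]\leq C(M)/h^2$, the explicit eigenvalue bound \eqref{eq:min_eigen_E} giving $\lambda_{\min}(E_h)=\lambda_{\min}(D)>h/3$, the norm estimates \eqref{eq:estimation_Ef}--\eqref{eq:estimation_Ah_T}, and $\|E_h\|\leq h$ (from the Toeplitz spectrum of $E_h$). Inserting these into \eqref{eq:constant_C_oc} with $D=E_h$ yields $C_{oc}=O(h^{-4})$, while $(1+\|E_h^{-1}\|^2T)\cdot\operatorname{Var}[R_{rb}]/(\lambda_{\min}^2(D)\lambda_{\min}^2(E_h))=O(C(M)/h^8)$. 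Multiplying,
\begin{equation*}
\E[\|\f_h(t)-\f_R(t)\|^2]+\E[\|\y_h(t)-\y_R(t)\|^2] \leq C\,\frac{C(M)}{h^{12}}\,\delta.
\end{equation*}
To pass from $\ell^2$ coefficient norms to $L^2(0,L)$ function norms, I use that for every $\boldsymbol{v}\in\mathbb R^N$ the associated $v\in V_h$ satisfies $\|v\|_{L^2(0,L)}^2=\langle E_h\boldsymbol v,\boldsymbol v\rangle\leq \|E_h\|\,\|\boldsymbol v\|^2\leq h\,\|\boldsymbol v\|^2$. Applied to $\y_R-\y_h$ and $\f_R-\f_h$ this gains one power of $h$, producing the $C(M)\delta/h^{11}$ contribution in \eqref{eq:estim_opt_control_heat_rand}. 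Summing both groups of terms completes the proof.

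The main obstacle I expect is the careful accounting of $h$-powers inside $C_{oc}$, in particular bounding $\|\y_d\|$: since $\y_d$ is the coefficient vector of the $V_h$-projection of $y_d$, one has $\|y_d^{\text{proj}}\|_{L^2}^2=\langle E_h\y_d,\y_d\rangle\geq \lambda_{\min}(E_h)\|\y_d\|^2$, which only gives $\|\y_d\|^2\leq 3\|y_d\|_{L^2(0,L)}^2/h$, and this $1/h$ must be absorbed without worsening the final exponent. One must also reconcile the $L^1$-in-time norm appearing in \eqref{eq:constant_C_oc} with the fact that $\f_h$ is here the optimal control (a function of $t$); this is handled by using a priori $L^2(0,T)$-bounds on the optimal pair of \eqref{eq:optimal_control}, uniform in $h$, which control $\|E_h^{-1}\f_h\|_{L^1(0,T;\mathbb R^N)}$ by the same argument leading to \eqref{eq:estimation_Ef}. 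Once this bookkeeping is in place, the two ingredients combine additively and yield exactly \eqref{eq:estim_opt_control_heat_rand}.
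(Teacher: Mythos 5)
Your proposal is correct and follows essentially the same route as the paper's proof: a triangle-inequality split handled by \Cref{th:conver_optimal_control} for the FEM part and \Cref{th:rbm_optimal_control} with $D=Q=E_h$ for the RBM part, with the identical power counting ($C_{oc}=O(h^{-4})$ via \eqref{estima_1_oc}, the variance factor $O(C(M)/h^{8})$ via \eqref{estima_2_oc}, and an $O(h)$ gain from passing coefficient norms to $L^2(0,L)$ norms, yielding $C(M)\delta/h^{11}$ as in \eqref{eq:all_contrant_h11}). Your two flagged concerns are real but benign, and you resolve them as the paper implicitly does: the $O(1/h)$ bound on $\|\y_d\|^2$ is absorbed by the dominant $O(1/h^{2})$ term inside \eqref{eq:constant_C_oc}, and the $L^1$-in-time norm of $E_h^{-1}\f_h$ is controlled by uniform-in-$h$ a priori bounds on the optimal control.
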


\begin{proof}
    Applying \Cref{th:conver_optimal_control} and using the fact that $\|\phi_j\|_{L^2(0,L)}^2 = 2h/3   $, we obtain
    \begin{align*}
        \E\left[\|y_R(t) - y(t)\|_{L^2(0,L)}^2\right] &\leq \E\left[\|y_R(t) - y_h(t)\|_{L^2(0,L)}^2\right] + \E\left[\|y_h(t) - y(t)\|_{L^2(0,L)}^2\right] \\
        &\leq \E\left[\sum_{j=1}^N \|(\y_R)_j(t) - (\y_h)_j(t)\|^2\right] \|\phi_j\|_{L^2(0,L)}^2 + Ch^4 \left(\frac{T}{t(T-t)} + \ln\left(\frac{1}{h}\right)\right)^2 \\
        &\leq \E\left[\|\y_R(t) - \y_h(t)\|^2\right] \frac{3h}{2} + Ch^4 \left(\frac{T}{t(T-t)} + \ln\left(\frac{1}{h}\right)\right)^2 .
    \end{align*}
    
    To estimate the first term on the right-hand side, we will use \Cref{th:rbm_optimal_control}. Recall that $D = E_h$, $\|E_h^{-1}\| = 1/\lambda_{\min}(E_h)$, and the estimations \eqref{eq:min_eigen_E}, \eqref{eq:estimation_Ef}, and \eqref{eq:estimation_y0}. Then, we deduce that the constant $C_{oc}$ from \eqref{eq:constant_C_oc} is bounded by
    \begin{align}\label{estima_1_oc}
        C_{oc} \leq C h^2 \left( (1 + T) \left(\frac{1}{h^2} + \frac{1}{h^2} + 1 \right) \frac{1}{h^2} \left(\frac{T^2}{h^2} + 2T\right) \right) \leq \frac{C_1}{h^4},
    \end{align}
    where $C_1$ is independent of $h$ and the decomposition of $R_h$. Analogous to \eqref{eq:estimation_var_a}, we have that
    \begin{align}\label{estima_2_oc}
        (1 + \|E_h^{-1}\|^2 T) \frac{\text{Var}[R_{rb}]}{\lambda_{\min}^4(E_h)} \leq \frac{C(M)}{h^8},
    \end{align}
    where $C(M)$ is defined as in \eqref{eq:estimation_var_a}. Then, by combining \Cref{th:rbm_optimal_control} with the estimates \eqref{estima_1_oc} and \eqref{estima_2_oc}, we deduce \eqref{eq:estim_opt_control_heat_rand}
    \begin{align}\label{eq:all_contrant_h11}
        \E[\|\y_h(t) - \y_R(t)\|^2] \frac{3h}{2}\leq C_{oc}(1+\|E^{-1}_h\|^2T) \frac{\text{Var}[ R_{rb}]}{\lambda_{min}^4(E_h)} \delta\leq\frac{C_1C(M)\delta}{h^{11}}.
    \end{align}
Performing an analogous computation for the expectation of the differences in the optimal controls yields the same upper bound.
\end{proof}

\begin{remark} 
    Note the following:
    \begin{enumerate}
       \item[1.] Whereas \Cref{th:convergence_RBM_FE} yields a denominator of order $h^{7}$, \Cref{th:op_random_approximation} leads to a higher-order term of $h^{11}$. This additional factor originates from \Cref{th:rbm_optimal_control}, which -- unlike \eqref{eq:estimation_2} -- introduces extra constants of order $h^{-1}$. When these contributions are propagated through the estimates in \eqref{estima_1_oc} and \eqref{estima_2_oc}, they accumulate, resulting in the overall $h^{11}$ dependence in the denominator, as demonstrated in \eqref{eq:all_contrant_h11}.

        \item[2.] Similar to \Cref{remark:conv_ht_hx}, by taking $\delta = h^{11+\varepsilon}$, we guarantee a convergence rate $h^{\varepsilon}$.
        \item[3.] The logarithmic factor in Theorem~\ref{th:op_random_approximation} arises from our reliance on Theorem~\ref{th:conver_optimal_control} (originally from \cite{MR1119274}) as an intermediate estimate. In that result, the authors show that the semigroup error satisfies
\begin{align}
\|\mathcal{T}(t)-\mathcal{T}_h(t)P_h\|_{\mathcal{L}(L^2(0,1))}
&\le c_1,e^{c_2,(t-s)},(t-s)^{-1},
\quad 0 \le s < t \le T,
\end{align}
where $\mathcal{T}(t)$ and $\mathcal{T}_h(t)$ denote the infinite- and finite-dimensional semigroups, respectively, $P_h$ is the orthogonal projection, and $c_1, c_2 > 0$ are constants.

To bound the error in the optimal state, Duhamel's formula is applied. When the resulting time integral is split appropriately, the singularity in the integrand -- represented by the factor $(t-s)^{-1}$ -- gives rise to a logarithmic contribution, namely $-\ln(h) = \ln(1/h)$. This term then propagates to the final error estimate for the optimal control.

We remark that even in more recent and generalized results (see \cite{MR4879947} and references therein), the same logarithmic factor $\ln(1/h)$ persists.
    \end{enumerate}
\end{remark}

\section{The RBM for the Heat Equation on Networks}\label{sec:numerical}
In this section, we present the heat equation on graphs and discuss its discretization and the application of the RBM.

\subsection{The Heat Equation on Graphs}\label{subsec:numerics_equation}
Let $\mathcal{G} := (\mathcal{E}, \mathcal{V})$ be an oriented and connected metric graph with vertices $v \in \mathcal{V}$ and edges $e \in \mathcal{E}$. In the following, to simplify the presentation, we will focus on the graph $\mathcal{G}$ illustrated in \Cref{fig:graph_and_dec}. Nonetheless, the methodology developed herein is applicable to any connected metric graph. The graph $\mathcal{G}$ is composed by the edges and vertices:
\begin{align*}
    \mathcal{E} = \{e_i\}_{i=1}^{10},\quad \text{and} \quad \mathcal{V} = \mathcal{V}_0 \cup \mathcal{V}_b=\{v_i\}_{i=1}^3\cup  \{v_i\}_{i=4}^{11},
\end{align*}
where $\mathcal{V}_0$ and $\mathcal{V}_b$ denote the interior and boundary vertices, respectively. Here, the orientation of the graph is used merely to identify each edge $e \in \mathcal{E}$ with an interval $(0, l_e)$ for $l_e > 0$, without ambiguity, where $0$ and $l_e$ represent the incoming and outgoing vertices of $e$, respectively. For simplicity, we also assume that $l_e = L > 0$ for all $e \in \mathcal{E}$.

On each edge $e_i$ with $i \in \{1, \dots, 10\}$, we consider the heat equation
\begin{align}\label{eq:heat_net}
\begin{cases}
\pt y^{e_i} - \pxx y^{e_i} = f^{e_i},\quad &(x,t) \in (0,L) \times (0,T),\\
y^{e_i}(x,0) = y_{e_i}^0(x),&x \in (0,L),
\end{cases}
\end{align}
where $y_{e_i}^0$ is the initial condition and $f^{e_i}$ is the source term, both defined for every $i \in \{1, \dots, 10\}$. System \eqref{eq:heat_net} is complemented with boundary and coupling conditions:
\begin{align}\label{eq:heat_bc}
\begin{cases}
y^e(v,t) = 0, \quad & v \in \mathcal{V}_b,\, e \in \mathcal{E}(v),\\
y^{e'}(v,t) = y^{e''}(v,t),& v \in \mathcal{V}_0,\, e', e'' \in \mathcal{E}(v),\\
\displaystyle\sum_{e \in \mathcal{E}(v)} \px y^e(v,t) n_e(v) = 0, & v \in \mathcal{V}_0,
\end{cases}
\end{align}
over the time interval $(0,T)$. Here, $n_e(v)$ denotes the incidence vector of the directed graph, and $\mathcal{E}(v)$ denotes the set of edges incident to the vertex $v$. The first equation in \eqref{eq:heat_bc} corresponds to a natural extension of the null Dirichlet boundary condition. The second and third conditions of \eqref{eq:heat_bc} are coupling conditions for the interior vertices of the graph. In the following, we denote by $f$ and $y^0$ the sets of functions $\{f^{e_i}\}_{i=1}^{10}$ and $\{y_{e_i}^0\}_{i=1}^{10}$, respectively. We assume that $f \in L^2(0,T;L^2(\mathcal{E}))$ and $y^0 \in H^1_0(\mathcal{E})$ (see \cite[Chapter 3]{MR3243602} for an introduction to Lebesgue and Sobolev spaces on graphs).  The well-posedness of the system \eqref{eq:heat_net}-\eqref{eq:heat_bc} has been studied in \cite{Egger}, ensuring the existence of a unique solution $y \in C(0,T;L^2(\mathcal{E})) \cap L^2(0,T;H^1_0(\mathcal{E}))$.
\smallbreak

 In the following, we consider the initial condition $y^0$ and the source term $f$ as specified in \Cref{tab:expressions_ei}. The exact solution of \eqref{eq:heat_net}–\eqref{eq:heat_bc}, associated with this data, is also given in \Cref{tab:expressions_ei}. The sequence of coefficients \( \{p_{e_i}\}_{i=1}^{10} \), defined in \Cref{tab:coefficients_pe}, is carefully chosen so that $y^e$ satisfies \eqref{eq:heat_net}–\eqref{eq:heat_bc} with the corresponding data. This construction follows the method of manufactured solutions, as described in \cite[Chapter 12]{MR3931345}. That is, we begin by prescribing a smooth candidate function $y^e$ as the exact solution and then define $f$ and $y^0$ accordingly, ensuring that the equation and boundary conditions are satisfied. This approach allows for a controlled setting in which the numerical error can be precisely assessed.

\begin{table}[H]
  \centering
  \caption{Expressions for \( y_{e_i}^0(x) \), \( f^{e_i}(x,t) \) and  \( y^{e_i}(x,t) \) for \( i \in \{1, \dots, 10\} \).}
  \label{tab:expressions_ei}
  \begin{tabular}{ccccc}
    \toprule
      \( y_{e_i}^0(x) \)& & \( f^{e_i}(x,t) \) && \( y^{e_i}(x,t) \) \\
    \midrule
     \( p_{e_i} \cdot x \cdot (1 - x) \) & &\( p_{e_i} \cdot (2 - x + x^2) \cdot e^{-t} \) & & \( p_{e_i} \cdot x \cdot (1 - x) \cdot e^{-t} \) \\
    \bottomrule
  \end{tabular}
\end{table}

\begin{table}[H]
  \centering
  \caption{Coefficients \( p_{e_i} \) for \( i \in \{1, \dots, 10\} \) used to introduce the problem data.}
  \label{tab:coefficients_pe}
  \begin{tabular}{cccccccccc}
    \toprule
    \( p_{e_1} \) & \( p_{e_2} \) & \( p_{e_3} \) & \( p_{e_4} \) & \( p_{e_5} \) & \( p_{e_6} \) & \( p_{e_7} \) & \( p_{e_8} \) & \( p_{e_9} \) & \( p_{e_{10}} \) \\
    \midrule
    1 & -1 & -1 & 1 & -1 & -1 & -1 & 2 & -1 & -1 \\
    \bottomrule
  \end{tabular}
\end{table}

We will use finite elements to introduce a semi-discrete version of the system \eqref{eq:heat_net}-\eqref{eq:heat_bc}. For this purpose, it is necessary to introduce a finite-dimensional space $V_h^{\mathcal{E}}$ to approximate  $H_0^1(\mathcal{E})$. On each edge $e_i$ for $i \in \{1, \dots, 10\}$, we introduce the spatial mesh $\{x_j^i\}_{j=0}^{N+1}$ with step size $h$. Due to the structure of the graph, we identify the vertices as follows: 
\begin{align*}
    x_{N+1}^1 = x_0^2 = x_0^3 = x_0^4,\quad x_{N+1}^4 = x_0^5 = x_0^6 = x_0^7 = x_{N+1}^8,\quad x_{N+1}^9 = x_0^8 = x_0^{10}.
\end{align*}
On each edge $e_i$ we can introduce the basis functions $\{\phi_j^i\}_{j=1}^N$ as in \Cref{section_FEM}. These functions span a finite-dimensional space that can be used to approximate the solution in the interior of each edge. However, we also need to introduce basis functions to approximate the solution on the graph's interior vertices, or junctions. For this, we define the functions $\{\phi_0^4, \phi_0^6, \phi_0^{10}\}$ as follows:
\begin{align*}
\phi_0^4(x) = \begin{cases}
   (x - x_{N}^1)/h, &\text{if } x \in [x_{N}^1, x_0^4], \\
   (x_1^2 - x)/h, &\text{if } x \in [x_0^4, x_1^2], \\
   (x_1^3 - x)/h, &\text{if } x \in [x_0^4, x_1^3], \\
   (x_1^4 - x)/h, &\text{if } x \in [x_0^4, x_1^4],
\end{cases}
\quad \phi_0^6(x) = \begin{cases}
   (x - x_{N}^4)/h, &\text{if } x \in [x_{N}^4, x_0^6], \\
   (x_1^5 - x)/h, &\text{if } x \in [x_0^6, x_1^5], \\
   (x_1^6 - x)/h, &\text{if } x \in [x_0^6, x_1^6], \\
   (x_1^7 - x)/h, &\text{if } x \in [x_0^6, x_1^7], \\
   (x - x_{N}^8)/h, &\text{if } x \in [x_{N}^8, x_0^6],
\end{cases}
\end{align*}
\begin{align*}
\phi_0^{10}(x) = \begin{cases}
   (x - x_{N}^9)/h, &\text{if } x \in [x_{N}^9, x_0^{10}], \\
   (x_1^8 - x)/h, &\text{if } x \in [x_0^{10}, x_1^8], \\
   (x_1^{10} - x)/h, &\text{if } x \in [x_0^{10}, x_1^{10}],
\end{cases}
\end{align*}
vanishing away from their domain of definition. The basis functions $\phi_0^4$ and $\phi_0^6$ are illustrated in \Cref{fig:basis_illustration}.
\begin{figure}[H]
    \centering
    \includegraphics[scale=0.6]{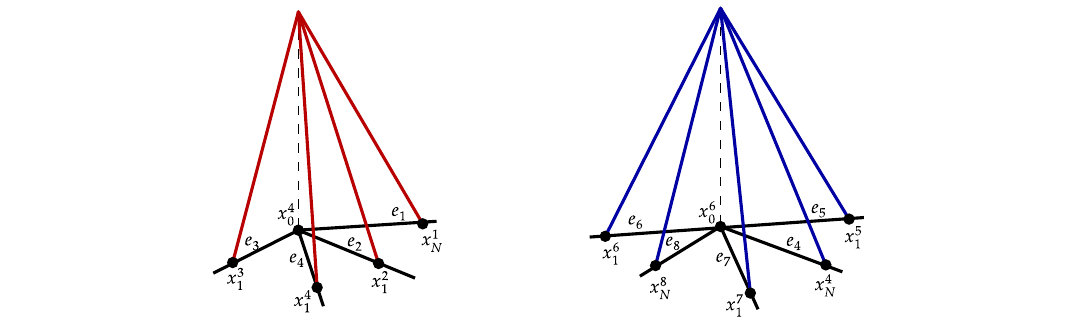}
    \caption{Illustration of the basis functions defined at the interior vertices $x^4_0$ and $x^6_0$ of the graph. Both functions attain a maximum value of 1.}
    \label{fig:basis_illustration}
\end{figure}
Let us consider the space $V_{h}^{\mathcal{E}}$ as the span of the functions $\{\phi_0^4, \phi_0^6, \phi_0^{10}\} \cup \bigcup_{i=1}^{10} \bigcup_{j=1}^N \{\phi_j^i\}$. Observe that the finite-dimensional space $V_{h}^{\mathcal{E}}$ is a natural finite element approximation of the space $H_0^1(\mathcal{E})$. Let us assume that $y^0 \in H^1_0(\mathcal{E})$, and denote by $y_h^0$ the projection of $y^0$ onto $V_{h}^{\mathcal{E}}$ with respect to the scalar product in $H^1_0(\mathcal{E})$. Then, we can introduce a weak formulation of the system \eqref{eq:heat_net}-\eqref{eq:heat_bc} as follows: Find $y_h \in C^1(0,T;V_h^{\mathcal{E}})$ such that  
\begin{align}\label{eq:variational_relation_y_graph}
    \begin{cases}
        \displaystyle\sum_{i=1}^{10}\left(\int_{0}^L \partial_t y^{e_i}_h(x,t) \phi_j^i(x) + \px y^{e_i}_h(x,t)\px\phi_j^i(x)\,dx\right) = \sum_{i=1}^{10}\int_0^L f^{e_i}(x,t)\phi_j^i(x)\,dx, \vspace{0.1cm}\\
        \displaystyle\int_0^L y^{e_i}_h(x,0) \phi_j^i(x) = \int_0^L y^0_{h,e_i}(x)\phi_j^i(x)\,dx,
    \end{cases}
\end{align}
for every $t \in (0,T)$. The solution on each edge $y_h^{e_i}$ can be written as
\begin{align}\label{eq:discretization_y_net}
y_h^{e_i}(x,t)=\begin{cases}\displaystyle\sum_{j=0}^N y_j^{e_i}(t)\phi_j^i(x), & \text{if } i \in \{4,6,10\}, \\\displaystyle
\sum_{j=1}^N y_j^{e_i}(t)\phi_j^i(x), & \text{otherwise}.
\end{cases}
\end{align}
Observe that we have adopted the convention of storing the coefficient value of the solution associated with the interior vertices on the edges $e_4$, $e_6$, and $e_{10}$.  Therefore, the representation of the solution as a linear combination of the basis elements for these edges begins with index 0 (i.e., includes one extra coefficient).

Then, let us consider the vector of coefficients $\Y_h(t) = \left(\y^{e_1}_h(t), \dots, \y^{e_{10}}_h(t)\right)^\top$, where
\begin{align}\label{eq:notation_vector_sol}
\begin{cases}
    \y^{e_i}_h = (y^{e_i}_0(t), \dots, y^{e_i}_N(t))^\top, & \text{if } i \in \{4,6,10\}, \\
    \y^{e_i}_h = (y^{e_i}_1(t), \dots, y^{e_i}_N(t))^\top, & \text{otherwise}.
\end{cases}
\end{align}
Here, $\Y_h(t) \in \R^{10N+3}$ (with $N$ discretization points for each of the $10$ edges and $3$ extra discretization points for the internal vertices). We also introduce the vectors $\F_h(t) = (\f_h^{e_1}, \dots, \f_h^{e_{10}})$, where
\begin{align*}
    \begin{cases}
    \f^{e_i}_h = (f^{e_i}_0(t), \dots, f^{e_i}_N(t))^\top, & \text{if } i \in \{4,6,10\}, \\
    \f^{e_i}_h = (f^{e_i}_1(t), \dots, f^{e_i}_N(t))^\top, & \text{otherwise},
\end{cases}
\end{align*}
with $f^{e_i}_j = (f^{e_i}, \phi_j)_{L^2(0,L)}$. Similarly to \eqref{eq:discretization_y_net} and \eqref{eq:notation_vector_sol}, we introduce $\Y_h^0 = (\y_{0,h}^{e_1}, \dots, \y_{0,h}^{e_{10}})$. Observe that $\Y_h$ satisfies the following system:
\begin{align}\label{eq:discrete_network_fem}\begin{cases}
        \boldsymbol{E}_h\pt \Y_h + \boldsymbol{R}_h\Y_h = \F_h, & t \in (0,1), \\
         \boldsymbol{E}_h\Y_h(0) = \Y_h^0.
    \end{cases}
\end{align}
The matrix $\boldsymbol{R}_h$ is given by
\begin{align}\label{eq:matrix_R_net}
&\hspace{1.2cm}\bigl[\hspace{0.5cm}
  e_1 \quad e_2 \quad e_3 \hspace{0.72cm} v_1 \hspace{0.74cm} e_4 \quad e_5 \hspace{0.7cm}v_2 \hspace{0.72cm}
  e_6 \quad e_7 \quad e_8 \quad e_9 \hspace{0.81cm} v_3\hspace{0.74cm} e_{10}\hspace{0.5cm}
\bigr]\\
&\boldsymbol{R}_h \;=\;
\begin{pmatrix}
  \begin{array}{c|c|c|c|c|c|c}
    %----------------------------------------------------------------
    % 1ª fila (parte superior de cada celda del bloque 1)
    %
    \begin{array}{c}
\boldsymbol{R}_h^1
    \end{array}
    &
    \begin{array}{c}
      c_1^4
    \end{array}
    &
    \begin{array}{c}
      0
    \end{array}
    &
    \begin{array}{c}
      0
    \end{array}
    &
    \begin{array}{c}
      0
    \end{array}
    &
    \begin{array}{c}
      0
    \end{array}
    &
    \begin{array}{c}
      0
    \end{array}
    \\[1.5mm]
    \hline
    %----------------------------------------------------------------
    % 2ª fila (parte inferior de cada celda del bloque 1)
    %
    \begin{array}{c}
      a_1^4 \quad a_2^4 \quad a_3^4
    \end{array}
    &
    \begin{array}{c}
       \frac{4}{h}
    \end{array}
    &
    \begin{array}{c}
      a_4^4 \quad 0
    \end{array}
    &
    \begin{array}{c}
      0
    \end{array}
    &
    \begin{array}{c}
      0
    \end{array}
    &
    \begin{array}{c}
      0
    \end{array}
    &
    \begin{array}{c}
      0
    \end{array}
    \\[1.5mm]
    \hline
    %----------------------------------------------------------------
    % 3ª fila (parte superior de cada celda del bloque 2)
    %
    \begin{array}{c}
      0
    \end{array}
    &
    \begin{array}{c}
      c_2^4
    \end{array}
    &
    \begin{array}{c}
      \boldsymbol{R}_h^2
    \end{array}
    &
    \begin{array}{c}
      c_1^6
    \end{array}
    &
    \begin{array}{c}
      0
    \end{array}
    &
    \begin{array}{c}
      0
    \end{array}
    &
    \begin{array}{c}
      0
    \end{array}
    \\[1.5mm]
    \hline
    %----------------------------------------------------------------
    % 4ª fila (parte inferior de cada celda del bloque 2)
    %
    \begin{array}{c}
      0
    \end{array}
    &
    \begin{array}{c}
      0
    \end{array}
    &
    \begin{array}{c}
      a_4^6 \quad a_5^6
    \end{array}
    &
    \begin{array}{c}
       \frac{5}{h}
    \end{array}
    &
    \begin{array}{c}
      a_6^6 \quad a_7^6 \quad a_8^6 \quad 0
    \end{array}
    &
    \begin{array}{c}
      0
    \end{array}
    &
    \begin{array}{c}
      0
    \end{array}
    \\[1.5mm]
    \hline
    %----------------------------------------------------------------
    % 5ª fila (parte superior de cada celda del bloque 3)
    %
    \begin{array}{c}
      0
    \end{array}
    &
    \begin{array}{c}
      0
    \end{array}
    &
    \begin{array}{c}
      0
    \end{array}
    &
    \begin{array}{c}
      c_2^6
    \end{array}
    &
    \begin{array}{c}
      \boldsymbol{R}_h^3
    \end{array}
    &
    \begin{array}{c}
      c_1^{10}
    \end{array}
    &
    \begin{array}{c}
      0
    \end{array}
    \\[1.5mm]
    \hline
    %----------------------------------------------------------------
    % 6ª fila (parte inferior de cada celda del bloque 3)
    %
    \begin{array}{c}
      0
    \end{array}
    &
    \begin{array}{c}
      0
    \end{array}
    &
    \begin{array}{c}
      0
    \end{array}
    &
    \begin{array}{c}
      0
    \end{array}
    &
    \begin{array}{c}
      0 \quad 0 \quad a_8^{10} \quad a_9^{10}
    \end{array}
    &
    \begin{array}{c}
       \frac{3}{h}
    \end{array}
    &
    \begin{array}{c}
      a_{10}^{10}
    \end{array}
    \\[1.5mm]
    \hline
    %----------------------------------------------------------------
    % 7ª fila (último bloque, sólo una línea)
    %
    0 
    & 0 
    & 0 
    & 0 
    & 0 
    & c_2^3 
    & R_h 
    \\[1.5mm]
  \end{array}
\end{pmatrix},
\end{align}
where $\boldsymbol{R}_h^1 = \text{Diag}(R_h, R_h, R_h)$, $\boldsymbol{R}_h^2 = \text{Diag}(R_h, R_h)$, and $\boldsymbol{R}_h^3 = \text{Diag}(R_h, R_h, R_h, R_h)$ are block diagonal matrices, with $R_h \in \R^{N \times N}$ being the stiffness matrix introduced in \eqref{eq:stiffness_mass_matrices}. For each $k \in \{4, 6, 10\}$, the vectors $a_i^k \in \R^N$ have coordinates $(\px \phi_j^k, \px \phi_0^i)_{L^2(0,L)}$ for $j \in \{1, \dots, N\}$. The values of these vectors correspond to the inner products between the basis elements and the functions at the interior vertices. Finally, observe that the column vectors $c_i^k$ are such that the matrix $\boldsymbol{R}_h$ is symmetric, i.e., $c_1^4 = (a_1^4, a_2^4, a_3^4)^\top$, $c_2^4 = (a_4^4, 0)^\top$, and so on. The symmetry of $\boldsymbol{R}_h$ arises from the symmetry of the product $(\px\phi_j^k, \px\phi_n^i)_{L^2(0,L)}$ for $j, n \in \{1, \dots, N\}$.

The vector, within square brackets, above the matrix $\boldsymbol{R}_h$, indicates which sector of the graph each block corresponds to (for example, $\boldsymbol{R}_h^1$ represents the discretization of the dynamics on edges $e_1$, $e_2$, and $e_3$).

Similarly, $\boldsymbol{E}_h$ is a symmetric and positive definite matrix sharing the same structure as \eqref{eq:matrix_R_net}, where the block diagonal matrices are defined using the mass matrix $E_h$ from \eqref{eq:stiffness_mass_matrices}. 

It is worth noting that $\boldsymbol{R}_h$ not only encodes information about the dynamics but also incorporates the topological structure of the graph, including the coupling conditions at the interior vertices. 
The decomposition of $\boldsymbol{R}_h$ will be performed taking into account the structure of the graph. In the next section, we present two examples of decompositions: one with \textit{overlapping} and one \textit{without overlapping} (non-overlapping).
\subsection{Numerical Simulations}

In this section, we aim to corroborate \Cref{th:convergence_RBM_FE} and \Cref{th:op_random_approximation}. We begin with an overlapping decomposition example and subsequently discuss a non-overlapping one. Most of our analysis is developed for the overlapping case, as the non-overlapping decomposition is completely analogous.
 In the overlapping decomposition case, we verify \Cref{remark:conv_ht_hx} to illustrate convergence, and we also compare the computational time and memory usage of the RBM. 
 
\medspace\medspace

\noindent
\textbf{Computational Setup.} All numerical experiments were executed on a device featuring an AMD Ryzen 9 5900HS processor with Radeon Graphics, clocked at $3.30$ GHz, and equipped with $16.0$ GB of RAM (with $15.4$ GB usable). The experiments were implemented in Python, utilizing the NumPy and SciPy libraries.
\subsubsection{Overlapping decomposition}\label{sec:overlapping_dec}
    Let us introduce the following decomposition of the matrix $\boldsymbol{R}_h$:
    \begin{align}\label{eq:decomposition_matrix_Ah_by_blocks}
    \boldsymbol{R}_h&=\begin{pmatrix}
            \textcolor{blue1}{\bf B_1} & 0&0\\
            0&0&0\\
            0&0&0
        \end{pmatrix}+\begin{pmatrix}
            0 & 0&0\\
            0&\textcolor{green1}{\bf B_2} &0\\
            0&0&0
        \end{pmatrix}+\begin{pmatrix}
            0&0&0\\
            0&0&0\\
            0&0&\textcolor{red1}{\bf B_3}
        \end{pmatrix}= \boldsymbol{R}_1+ \boldsymbol{R}_2+ \boldsymbol{R}_3,
    \end{align}
    where the matrices $\textcolor{blue1}{\bf B_1}\in \R^{N_1\times N_1}$, $\textcolor{green1}{\bf B_2}\in \R^{N_2\times N_2}$, and $\textcolor{red1}{\bf B_3}\in \R^{N_3\times N_3}$ are given by
    \begin{gather}\label{eq:def_matrices_B}
       \textcolor{blue1}{\bf B_1} =\begin{pmatrix}
    \begin{matrix}
    \boldsymbol{R}_{h}^{1} \ \\
    \begin{smallmatrix}
    a_{1}^{1} & a_{2}^{1} & a_{3}^{1}
    \end{smallmatrix}
    \end{matrix} & \begin{matrix}
    c_{1}^{1}\\
    \begin{smallmatrix}
    4/h
    \end{smallmatrix}
    \end{matrix} & \begin{matrix}
    0\\
    \begin{smallmatrix}
    a_{4}^{1}
    \end{smallmatrix}
    \end{matrix}\\
    \begin{matrix}
    0
    \end{matrix} & \begin{matrix}
    c_{2}^{1}
    \end{matrix} & \begin{matrix}
    \frac{R_{h}}{2}
    \end{matrix}
    \end{pmatrix},\,
    \textcolor{green1}{\bf B_2} =\begin{pmatrix}
    \frac{R_{h}}{2} & 0 & c_{1}^{2} & 0 & 0\\
    0 & R_{h} & c_{2}^{2} & 0 & 0\\
    \begin{smallmatrix}
    a_{4}^{2}
    \end{smallmatrix} & \begin{smallmatrix}
    a_{5}^{2}
    \end{smallmatrix} & \begin{smallmatrix}
    5/h
    \end{smallmatrix} & \begin{smallmatrix}
    a_{6}^{2} & a_{7}^{2}
    \end{smallmatrix} & \begin{smallmatrix}
    a_{8}^{2}
    \end{smallmatrix}\\
    0 & 0 & c_{3}^{2} & \boldsymbol{R}_{h}^{2} \  & 0\\
    0 & 0 & 0 & 0 & \frac{R_{h}}{2}
    \end{pmatrix},\,\textcolor{red1}{\bf B_3}=\begin{pmatrix}
    \begin{matrix}
    \frac{R_{h}}{2}
    \end{matrix} & 0 & \begin{matrix}
    c_{1}^{3}
    \end{matrix} & \begin{matrix}
    0
    \end{matrix}\\
    0 & R_{h} & c_{2}^{3} & {\displaystyle 0}\\
    0 & \begin{smallmatrix}
    a_{8}^{3} & a_{9}^{3}
    \end{smallmatrix} & \begin{smallmatrix}
    3/h
    \end{smallmatrix} & \begin{smallmatrix}
    a_{10}^{3}
    \end{smallmatrix}\\
    0 & 0 & c_{3}^{3} & R_{h}
    \end{pmatrix},
    \end{gather}
    where $N_1= 4N+1$, $N_2= 5N+1$ and $N_3=3N+1$. The matrices $\textcolor{blue1}{\bf B_1}$ and $\textcolor{green1}{\bf B_2}$ were constructed to share the last and first block matrix, respectively, associated with edge $e_4$. Similarly, $\textcolor{green1}{\bf B_2}$ and $\textcolor{red1}{\bf B_3}$ share their last and first block matrix, associated with edge $e_8$. This decomposition can be interpreted as an \textit{overlapping decomposition}, where the matrices $\textcolor{blue1}{\bf B_1}$, $\textcolor{green1}{\bf B_2}$, and $\textcolor{red1}{\bf B_3}$ represent the dynamics on the edges adjacent to vertices $v_1$, $v_2$, and $v_3$, respectively. This is illustrated in \Cref{fig:graph_and_dec}, where the dynamics on the subgraph with edges $\{e_1, e_2, e_3, e_4\}$ is driven by matrix $\textcolor{blue1}{\bf B_1}$. Similarly, the dynamics on the subgraphs with edges $\{e_4, e_5, e_6, e_7,e_8\}$ and $\{e_8, e_9, e_{10}\}$ are driven by the matrices $\textcolor{green1}{\bf B_2}$ and $\textcolor{red1}{\bf B_3}$, respectively.
    
    % \begin{figure}[h]
    %     \centering
    %     \includegraphics[scale=0.25]{images/fig_overlapping.png}
    %     \caption{\textcolor{red}{Tal y como he mencionado en uno de mis emails, no esta claro que del modo que definimos la variable random en cada subintervalo temporal estemos eligiendo uno de los subgrafos. Mas bien seria una superposicion de los elegidos, no es asi? tal vez sea la notacion la que me confunde pero tengo la impresion que en nuestro caso seria mucho mas facil si eligieramos una descomposicion a priori, tal y como se hace, y luego la variable random tomando valores en [1...M] pero no en el conjunto de los subconjuntos de [1...M]. }Illustration of the subgraph involved in the matrices $\textcolor{blue1}{\bf B_1}$, $\textcolor{green1}{\bf B_2}$, and $\textcolor{red1}{\bf B_3}$. The dynamics on the subgraph with edges $\{e_1, e_2, e_3, e_4\}$ is driven by matrix $\textcolor{blue1}{\bf B_1}$. Similarly, the dynamics on the subgraphs with edges $\{e_4, e_5, e_6, e_7,e_8\}$ and $\{e_8, e_9, e_{10}\}$ are driven by the matrices $\textcolor{green1}{\bf B_2}$ and $\textcolor{red1}{\bf B_3}$, respectively.}
    %     \label{fig:overlappig}
    % \end{figure}
    To introduce the random system, we consider the subsets $S_i$ of $\{1, 2, 3\}$ (since $M = 3$). Let us decompose $\F_h$ as
    \begin{align*}
        \F_h = (\F_1, 0, 0)^{\top} + (0, \F_2, 0)^{\top} + (0, 0, \F_3)^{\top},
    \end{align*}
    where
    \begin{align*}
        \F_1 &= \left( \f_h^{e_1}, \f_h^{e_2}, \f_h^{e_3}, \frac{\f_h^{e_4}}{2} \right)^{\top},\quad
        \F_2 &= \left( \frac{\f_h^{e_4}}{2}, \f_h^{e_5}, \f_h^{e_6}, \f_h^{e_7}, \frac{\f_h^{e_8}}{2} \right)^{\top},
        \quad\F_3 &= \left( \frac{\f_h^{e_8}}{2}, \f_h^{e_9}, \f_h^{e_{10}} \right)^{\top}.
    \end{align*}
     We define 
     \begin{align*}
         S_1 = \{1\},\quad S_2 = \{2\},\quad S_3 = \{3\}, \quad \text{and}\quad  p_1 = p_2 = p_3 = 1/3,
     \end{align*}
     and $p_i=0$ for remainder $i\in \{1,\dots 8\}\setminus\{1,2,3\}$. Thus we have that $\pi_1 = \pi_2 = \pi_3 = 1/3$. Now, we can introduce a matrix $\boldsymbol{R}_{rb}$ as in \eqref{eq:definition_matrix_R_rb}. Within this setting, at each time interval, $\boldsymbol{R}_{rb}$ is assigned a randomly chosen matrix $\boldsymbol{R}_i$ with probability $1/3$. Then, define $\Y_R$ as the solution of the equation
    \begin{align}\label{eq:random_network_fem}
        \begin{cases}
            \boldsymbol{E}_h\, \partial_t \Y_R + \boldsymbol{R}_{rb} \Y_R = \F_{rb}, & t \in (0,T), \\
            \boldsymbol{E}_h \Y_R(0) = \Y_h^0.
        \end{cases}
    \end{align}
    Here, following \Cref{remark:mass_matrix}, the function $\F_{rb}$ is defined by
    \begin{align}\label{eq:def_finction_f_rand_net}
        \F_{rb}(t) = \sum_{m \in S_{\omega_k}} \frac{\F_m}{\pi_m}, \quad t \in [(k-1)\delta,\, k\delta),
    \end{align}
    for each $k \in \{1, \dots, K\}$. Similar to $\boldsymbol{R}_{rb}$, the function $\F_{rb}$ also satisfies $\mathbb{E}[\F_{rb}] = \F_h$.
    
    For the spatial discretization, we use $N = 300$ elements on each edge, and therefore $\boldsymbol{R}_{rb} \in \mathbb{R}^{3003 \times 3003}$. In the following, we take $T=L=1$ for simplicity. We compute the numerical solution of system \eqref{eq:random_system} using an implicit Euler discretization (IE) with a uniform time step $\Delta t = T/(\zeta - 1)$, where $\zeta = 201$ is the number of time collocation points. To compute the expectation of $\Y_R$, we perform 30 realizations and then average them. It is worth noting that each realization can be computed independently, allowing for parallel execution and thus not increasing the overall computational time.

   To evaluate the performance of the discretize+RBM scheme, we set the parameter $\delta = 2\Delta t = 0.01$. Note that, with this choice, convergence cannot be inferred from \Cref{th:convergence_RBM_FE} since $\delta \neq h^7$. Nevertheless, as we shall show, these parameters yield satisfactory accuracy relative to the full-network solution. Moreover, we observe that almost all realizations remain close to the average.

    The numerical results displayed in \Cref{fig:comparation} indicate that the expected value of the solution of the random system is close to the exact solution, with small variance; individual realizations exhibit no significant deviation from the average. \Cref{tab:memory} shows the error of the full-graph and the exact solution (given by \Cref{tab:expressions_ei}), and the (average) error between the \emph{discretize+RBM} and the exact solution. Additionally, \Cref{tab:memory} compares the (average) memory usage and execution time between solving  \emph{discretize+RBM}  and  the entire graph problem directly with the implicit Euler scheme. Here, the (average) memory usage is computed by first analyzing the peak memory usage of each realization and then averaging all the memory usage picks. The execution time is computed by averaging the execution time of each realization. The results indicate that the RBM-based approach uses half as much memory and is twice as fast as the full-graph solution. This trade-off involves a slight reduction in solution resolution, yet the error remains comparable, and the discretize+RBM method achieves twice the speed of the full-graph approach.

    \begin{figure}
        \centering
        \includegraphics[scale=0.5]{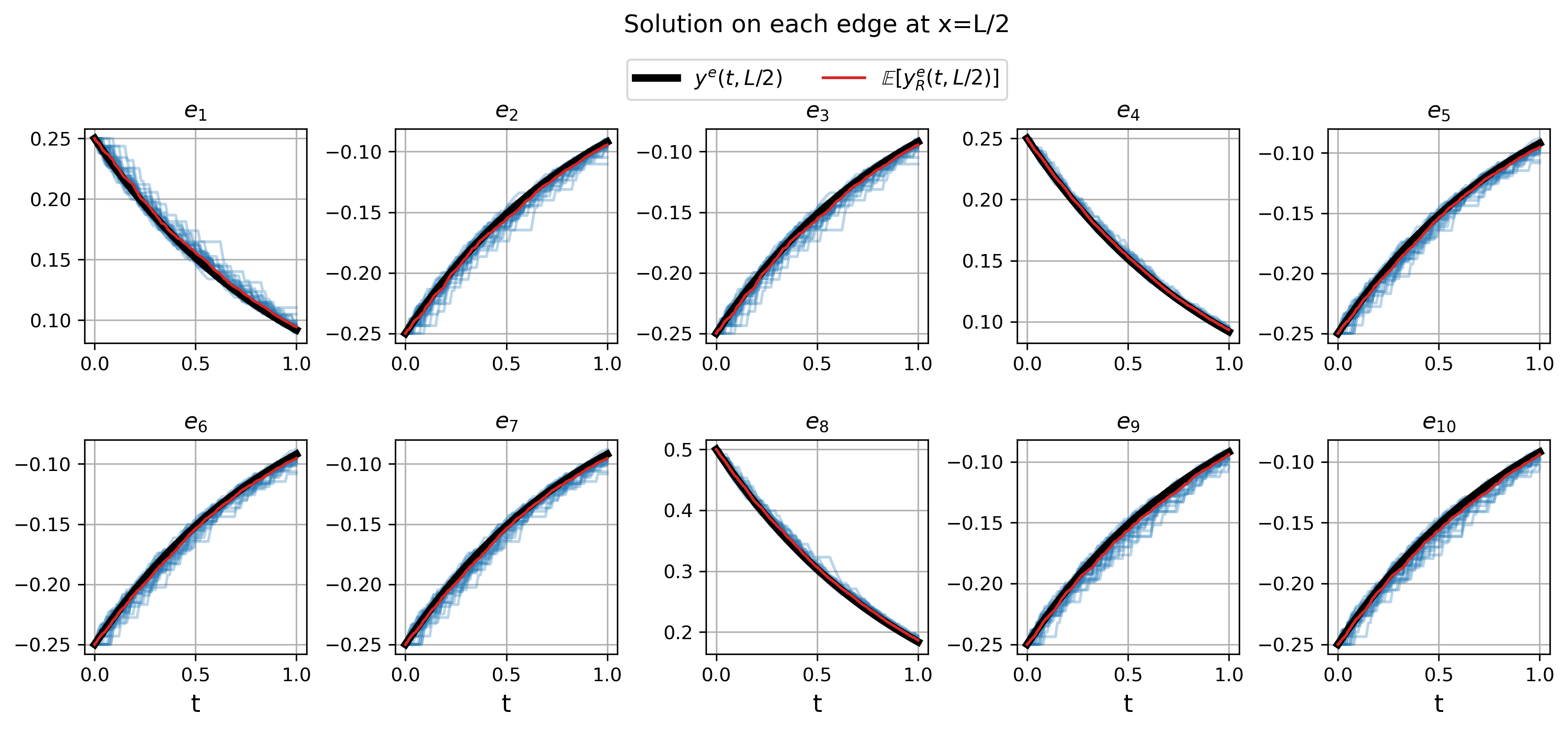}
        \caption{Comparison between the exact solution (in black) and the approximate solution (in red) at $x=L/2$ for each edge, using the RBM with the \textit{overlapping} decomposition. The blue lines denote the different realizations.}
        \label{fig:comparation}
    \end{figure}
    
    \begin{table}[H]
      \centering
      \caption{Comparison of the (average) maximum memory usage, error, and (average) execution time between the discretize+RBM approach and the full-graph implicit Euler method. The error is computed using the $L^\infty$-norm in time and $L^2$-norm in space for the full-graph method, and the $L^\infty$-norm in time and expected $L^2$-norm in space for the discretize+RBM scheme.}
      \label{tab:memory}
      \begin{tabular}{lccc}
        \toprule
         Method & Memory usage (MB) & Error  & Execution time (s) \\
        \midrule
         Discretize+RBM & 187.88  & 9.3951e-03 & 2.72 \\
         Full-Graph &  418.52  & 9.6152e-03 & 6.78\\
        \bottomrule
      \end{tabular}
    \end{table}

    Following \Cref{remark:conv_ht_hx}, to corroborate \Cref{th:convergence_RBM_FE}, we consider different values for $h$. For $\varepsilon > 0$, we choose $\delta = h^{7/(1-\varepsilon)}$. With this choice, the error in \Cref{th:convergence_RBM_FE} decreases at a rate of at least $\delta^\varepsilon$. By selecting $\varepsilon = 10^{-4}$, we obtain the results shown in \Cref{fig1}, which illustrates the convergence. Observe that the convergence rate is approximately $\delta^{0.2}$, which is faster than the expected rate of $\delta^{\varepsilon}$

    \Cref{tab:ab} presents the error values and the corresponding $h$ values. Here, \emph{Error RBM} denotes the $L^\infty$-norm in time and expected $L^2$-norm in space between the exact solution and the Discretize+RBM approach. Conversely, \emph{Error Full-Graph} represents the $L^\infty$-norm in time and $L^2$-norm in space between the exact solution and the solution computed on the full graph. Since $\delta = o(h^7)$ and $\Delta t\leq \delta$, we have chosen small $h$ values to validate our results. The \emph{Speedup} column corresponds to the quotient between the computational time of \emph{Full-Graph} and 
 \emph{Discretize+RBM}. As $h$ decreases, the discretize+RBM scheme not only speeds up computational time but may also deliver higher accuracy. However, this trend does not necessarily hold for smaller $h$, as observed, for instance, in \Cref{tab:memory}, where at $h=0.003$ the RBM accuracy, while still close, falls slightly below that of the full-graph solution.
    
    \begin{figure}[h]
        \centering
        \includegraphics[width=0.5\linewidth]{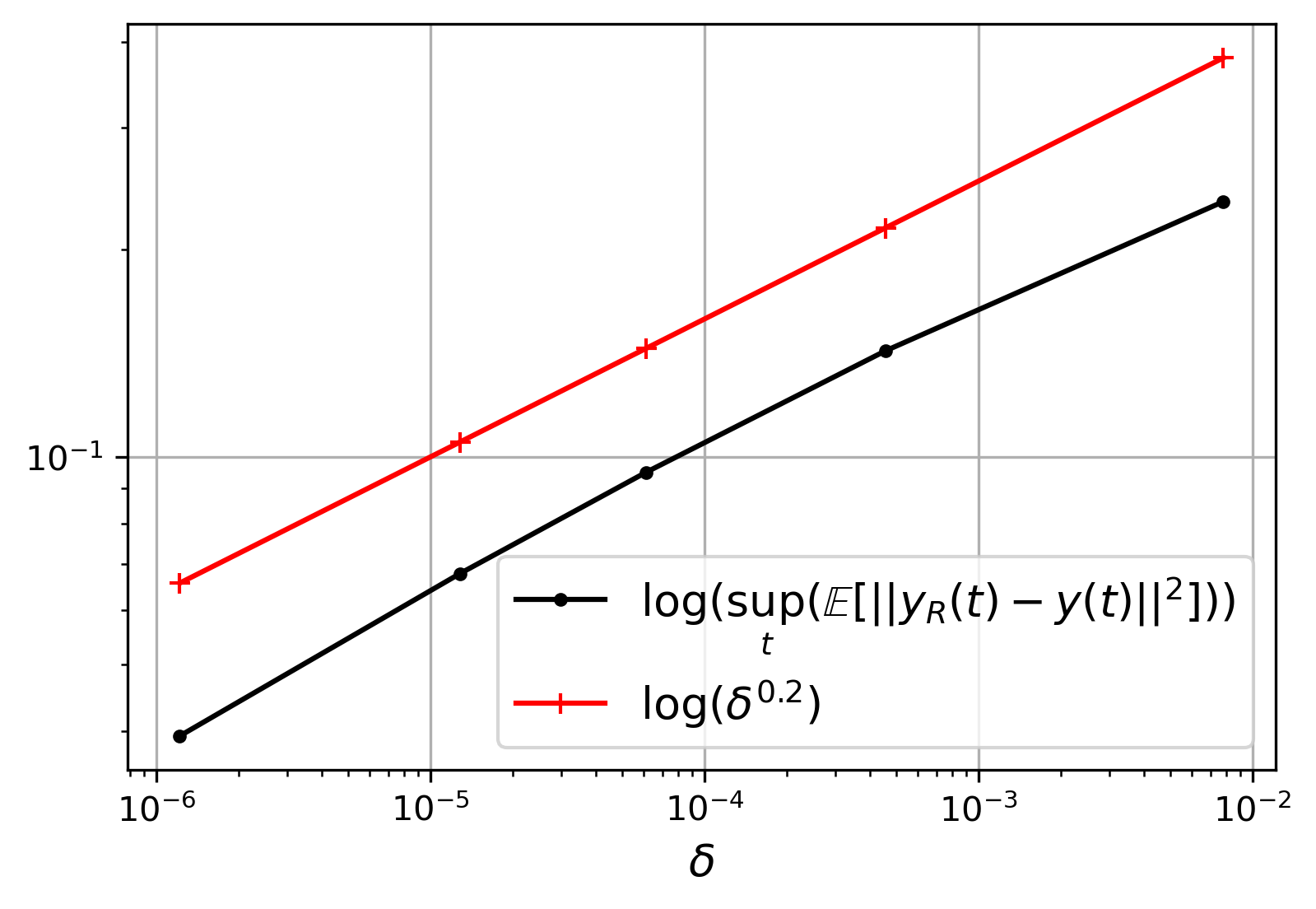}
        \caption{This figure illustrates the algorithm's convergence as $\delta\to 0$, with $\delta = o(h^7)$.}
        \label{fig1}
    \end{figure}
    
    \begin{table}[htbp]
      \centering
      \caption{RBM error and computational time for various values of \( h \), \( \delta \), and the expected error.}
      \label{tab:ab}
      \begin{tabular}{ccccccccc}
        \toprule
        \multirow{2}{*}{\( h \)} & \multirow{2}{*}{\( \delta = h^{7/(1-\varepsilon)}\)} &\multirow{2}{*}{Error RBM} & \multirow{2}{*}{Error Full-Graph} & \multicolumn{3}{c}{Execution time (s)} \\
        \cline{5-7}
        & & & & Discretize+RBM & Full-Graph & Speedup \\
        \hline
        %1.00e-00 & 1.00e+00 & 2.0451e-01 & 4.1236e-01 & 94.78 & 54.70 & 0.57 \\
        5.00e-01 & 7.81e-03 & 2.3449e-01 & 2.6230e-01 & 159.95 & 156.79 & 0.98 \\
        3.33e-01 & 4.57e-04 & 1.4249e-01 & 3.0412e-01 & 198.28 & 241.72 & 1.22 \\
        2.50e-01 & 6.10e-05 & 9.4943e-02 & 3.0126e-01 & 231.97 & 337.06 & 1.45 \\
        2.00e-01 & 1.28e-05 & 6.7732e-02 & 2.8652e-01 & 265.68 & 431.90 & 1.62 \\
        1.43e-01 & 1.21e-06 & 3.9374e-02 & 2.4814e-01 & 331.14 & 605.40 & 1.83 \\
        \toprule
      \end{tabular}
    \end{table}

\subsubsection{Nonoverlapping Decomposition}

In this example, we consider the same decomposition of the matrix $\boldsymbol{R}_h$ introduced in \eqref{eq:decomposition_matrix_Ah_by_blocks}, but with the following block matrices:

\begin{gather}\label{B_nonoverlapping}
   \textcolor{blue1}{\bf B_1} =\begin{pmatrix}
\begin{matrix}
\boldsymbol{R}_{h}^{1} \ \\
\begin{smallmatrix}
a_{1}^{1} & a_{2}^{1} & a_{3}^{1}
\end{smallmatrix}
\end{matrix} & \begin{matrix}
c_{1}^{1}\\
\begin{smallmatrix}
1
\end{smallmatrix}
\end{matrix} & \begin{matrix}
0\\
\begin{smallmatrix}
a_{4}^{1} & 0
\end{smallmatrix}
\end{matrix}\\
\begin{matrix}
0
\end{matrix} & \begin{matrix}
c_{2}^{1}
\end{matrix} & \begin{matrix}
\overline{R_h}
\end{matrix}
\end{pmatrix},\,\textcolor{green1}{\bf B_2} =\begin{pmatrix}
\underline{R_h} & 0 & c_{1}^{2} & 0 & 0\\
0 & R_{h} & c_{2}^{2} & 0 & 0\\
\begin{smallmatrix}
a_{4}^{2}
\end{smallmatrix} & \begin{smallmatrix}
a_{5}^{2}
\end{smallmatrix} & \begin{smallmatrix}
1
\end{smallmatrix} & \begin{smallmatrix}
a_{6}^{2} & a_{7}^{2}
\end{smallmatrix} & \begin{smallmatrix}
a_{8}^{2}
\end{smallmatrix}\\
0 & 0 & c_{3}^{2} & \boldsymbol{R}_{h}^{2} \  & 0\\
0 & 0 & 0 & 0 & \overline{R_h}
\end{pmatrix},\,\textcolor{red1}{\bf B_3}=\begin{pmatrix}
\begin{matrix}
\underline{R_h}
\end{matrix} & 0 & \begin{matrix}
c_{1}^{3}
\end{matrix} & \begin{matrix}
0
\end{matrix}\\
0 & R_{h} & c_{2}^{3} & {\displaystyle 0}\\
0 & \begin{smallmatrix}
a_{8}^{3} & a_{9}^{3}
\end{smallmatrix} & \begin{smallmatrix}
1
\end{smallmatrix} & \begin{smallmatrix}
a_{10}^{3}
\end{smallmatrix}\\
0 & 0 & c_{3}^{3} & R_{h}
\end{pmatrix},
\end{gather}
where the matrix $\overline{R_h}$ is given by
\begin{align}\label{eq:definition_over_under_R}
    \overline{R_h} = \begin{pmatrix}
     R_{1}^* & \biggr.\biggr|\begin{matrix} 0\\ -1 \end{matrix} \\ 
\overline{\begin{matrix} 0& -1 \end{matrix}} & 1 
    \end{pmatrix} \in \R^{\overline{N}\times \overline{N}},\quad  \underline{R_h} = \begin{pmatrix}
     1 &  \underline{\begin{matrix} -1& 0 \end{matrix}} \\ \begin{matrix} -1\\ 0 \end{matrix}\biggr.\biggr|  & R_{2}^* 
    \end{pmatrix} \in \R^{\underline{N}\times \underline{N}},
\end{align}
with $\overline{N} = (\lfloor N/2 \rfloor - 1)$, $\underline{N} = N - \overline{N} + 1$, and $R_1^*$ and $R_2^*$ being the square matrices $R_h$ introduced in \eqref{eq:stiffness_mass_matrices}, but of sizes $(\overline{N} - 1) \times (\overline{N} - 1)$ and $(\overline{N} - 1) \times (\overline{N} - 1)$, respectively.

We observe that the matrix $\overline{R_h}$ encodes the dynamics on half of an edge. In the present example, the block matrix $\textcolor{blue1}{\bf B_1}$ represents the dynamics on edges $e_1$, $e_2$, $e_3$, and the first half of edge $e_4$, while $\textcolor{green1}{\bf B_2}$ corresponds to the dynamics on edges $e_5$, $e_6$, $e_7$, the second half of $e_4$, and edge $e_8$. Finally, $\textcolor{red1}{\bf B_3}$ captures the dynamics on edges $e_9$, $e_{10}$, and the first half of $e_8$ (see \Cref{fig:non_overlapping_illustration}). We refer to this as a \textit{non-overlapping decomposition} because the block matrices share dynamics only at a single discretization point. In contrast, the \textit{overlapping decomposition} (presented in \Cref{sec:overlapping_dec}) features an overlap of $N$ nodes (since the block matrices share a matrix of $N\times N$), so that the magnitude of shared information increases with $N$, whereas in the non-overlapping case, the shared information remains constant regardless of $N$.

\begin{figure}[h!]
    \centering
    \includegraphics[scale=0.25]{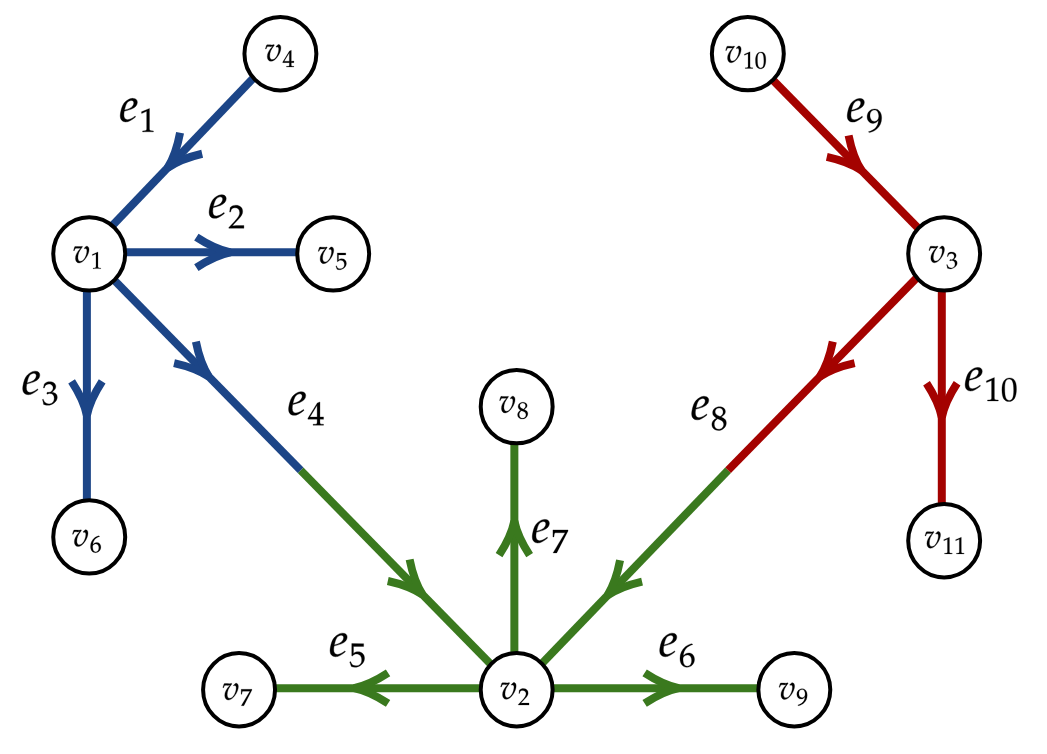}
    \caption{Illustration of the information contained in each block matrix. We illustrate in blue the information of $\textcolor{blue1}{\bf B_1}$, in green the information of $\textcolor{green1}{\bf B_2}$, and in red the information of $\textcolor{red1}{\bf B_3}$.}
    \label{fig:non_overlapping_illustration}
\end{figure}

Then, similar to \eqref{eq:decomposition_matrix_Ah_by_blocks}, we introduce a decomposition of the matrix $\boldsymbol{R}_h = \boldsymbol{R}_1 + \boldsymbol{R}_2 + \boldsymbol{R}_3$ using the blocks $\bf{B_i}$ introduced in \eqref{B_nonoverlapping}. Furthermore, we decompose $\F_h$ as

\begin{align*}
    \F_h = (\F_1, 0, 0)^{\top} + (0, \F_2, 0)^{\top} + (0, 0, \F_3)^{\top},
\end{align*}

where

\begin{align}\label{eq:decom_f_nonooverlap}
    \F_1 &= \left( \f_h^{e_1}, \f_h^{e_2}, \f_h^{e_3}, [\f_h^{e_4}]_{I_1} \right)^{\top}, \,
    \F_2 = \left( [\f_h^{e_4}]_{I_2}, \f_h^{e_5}, \f_h^{e_6}, \f_h^{e_7}, [\f_h^{e_8}]_{I_1} \right)^{\top}, \,
    \F_3 = \left( [\f_h^{e_8}]_{I_2}, \f_h^{e_9}, \f_h^{e_{10}} \right)^{\top}.
\end{align}

Here, $I_1 = \{0, \dots, \overline{N}\}$ and $I_2 = \{\overline{N}, \dots, N\}$. Using \eqref{eq:decom_f_nonooverlap}, we define $\F_{rb}$ according to \eqref{eq:def_finction_f_rand_net}, and $\boldsymbol{R}_{rb}$ as in \eqref{eq:definition_matrix_R_rb}. Then, we introduce the system \eqref{eq:random_network_fem}.

For the numerical simulation, we consider $N = 300$ elements for spatial discretization on each edge. To solve the ODEs, we use an implicit Euler scheme with $\zeta = 201$ time collocation points. To compute the expected value, we run 30 realizations. The numerical results are illustrated in \Cref{fig:non_overlapping_comparation}. Observe that the average approximates the solution on every edge, while the variance remains small. However, the \textit{overlapping scheme} seems to be more accurate, at less on edges $e_4$ and $e_8$.

\begin{figure}[h!]
    \centering
    \includegraphics[scale=0.5]{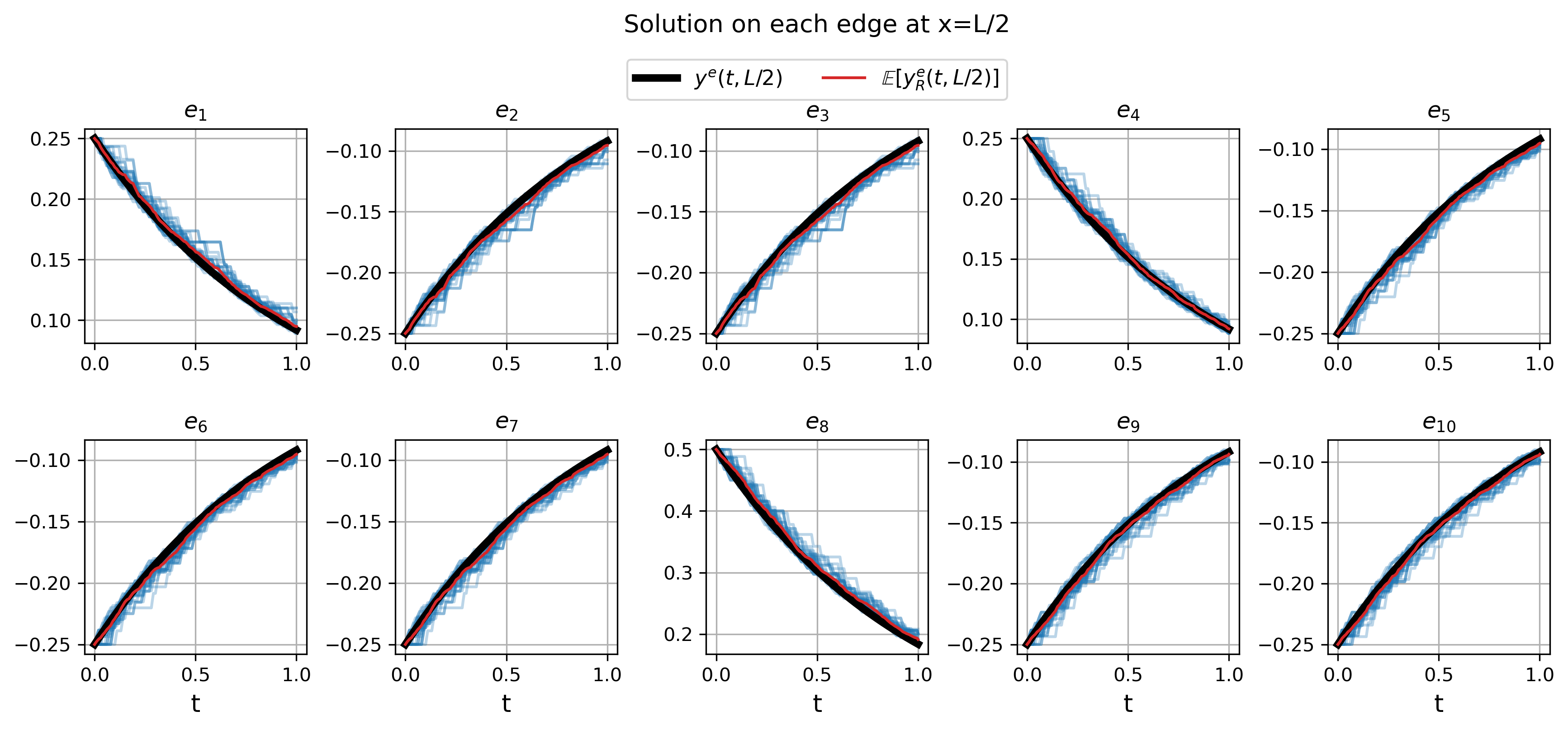}
    \caption{Comparison between the exact solution (in black) and the approximate solution at $x = L/2$ using the RBM (in red) on each edge with the \textit{non-overlapping decomposition}. The blue lines represent the different realizations.}
    \label{fig:non_overlapping_comparation}
\end{figure}

    \begin{table}[H]
      \centering
      \caption{Comparison of the (average) maximum memory usage, error, and (average) execution time between the discretize+RBM \textit{non-overlapping} approach and the full-graph implicit Euler method. The error is computed using the $L^\infty$-norm in time and $L^2$-norm in space for the full-graph method, and the $L^\infty$-norm in time and expected $L^2$-norm in space for the discretize+RBM scheme.}
      \label{tab:memory_non}
      \begin{tabular}{lccc}
        \toprule
         Method & Memory usage (MB) & Error  & Execution time (s) \\
        \midrule
         Discretize+RBM & 200.79  & 1.2896e-02 & 2.65  \\
         Full-Graph &  510.33  & 9.6152e-03 & 8.56\\
        \bottomrule
      \end{tabular}
    \end{table}

 \begin{remark}[Computational Efficiency of RBM]\label{remark:computational_efficiency}
To explain why the RBM is computationally more efficient than solving the full system, we first note that the mass matrix $\boldsymbol{E}_h$ has the same structure as the stiffness matrix $\boldsymbol{R}_h$. In both the \emph{overlapping} and \emph{non-overlapping} variants, when the submatrix $\boldsymbol{R}_1$ is selected on a subinterval $(t_k, t_{k+1})$, the corresponding random system becomes
\begin{align}\label{eq:system_one_realization}
\begin{pmatrix}
E_{11}^1 & E_{12}^1 & 0 \\
E_{21}^1 & E_{22}^1 & E_{23}^1 \\
0 & E_{32}^1 & E_{33}^1
\end{pmatrix}
\partial_t
\begin{pmatrix}
\boldsymbol{Y}_R^1 \\ \boldsymbol{Y}_R^2 \\ \boldsymbol{Y}_R^3
\end{pmatrix}
+
\begin{pmatrix}
\textcolor{blue1}{\mathbf{B}_1}/\pi_1 & 0 & 0 \\
0 & 0 & 0 \\
0 & 0 & 0
\end{pmatrix}
\begin{pmatrix}
\boldsymbol{Y}_R^1 \\ \boldsymbol{Y}_R^2 \\ \boldsymbol{Y}_R^3
\end{pmatrix}
=
\begin{pmatrix}
\boldsymbol{F}_1/\pi_1 \\ 0 \\ 0
\end{pmatrix}.
\end{align}

Due to the block structure of this system, the only differential equation that needs to be solved is the reduced system
\begin{align}\label{eq:equation_reduced}
\pi_1 \left(
E_{11} - E_{12} \left( E_{22} - E_{23} E_{33}^{-1} E_{32} \right)^{-1} E_{21}
\right) \partial_t \boldsymbol{Y}_R^1 + \textcolor{blue1}{\mathbf{B}_1} \boldsymbol{Y}_R^1 = \boldsymbol{F}_1.
\end{align}
Although this system involves the inversion of a matrix, its size is strictly smaller than $N \times N$. That is, neither the system nor the matrices involved reach the full spatial dimension of the original problem. The remaining components of the solution can be obtained by
\begin{align}\label{eq:relationfor_y2}
\boldsymbol{Y}_R^2(t) = \boldsymbol{Y}_R^2(t_k) - 
\left( E_{22} - E_{23} E_{33}^{-1} E_{32} \right)^{-1} E_{21} \big( \boldsymbol{Y}_R^1(t) - \boldsymbol{Y}_R^1(t_k) \big),
\end{align}
and
\begin{align}\label{eq:relationfor_y3}
\boldsymbol{Y}_R^3(t) = \boldsymbol{Y}_R^3(t_k) - E_{33}^{-1} E_{32} \big( \boldsymbol{Y}_R^2(t) - \boldsymbol{Y}_R^2(t_k) \big).
\end{align}
These expressions also involve matrix inversions; however, the required inverses are needed to solve \eqref{eq:equation_reduced}. Therefore, besides the ones needed in \eqref{eq:equation_reduced}, no additional matrix inversions are necessary.

In conclusion, the discretize+RBM scheme avoids the need to manipulate $N \times N$ matrices entirely. This structural advantage has been exploited in our implementation, leading to reductions in both memory usage and computational time.
\end{remark}

\begin{remark}[Non‐overlapping Decomposition]\label{remark:nonoverlapping}

We refer to the previous method as a non-overlapping decomposition due to its representation on the graph. However, classical non-overlapping domain decomposition, such as the Dirichlet-Neumann method, one imposes transmission conditions at interfaces to enforce flux continuity between adjacent subdomains. By contrast, the RBM splits the global stiffness matrix $\boldsymbol{R}_h$ into submatrices as in \eqref{eq:decom_A}, without introducing explicit interface operators.  

Nonetheless, information does propagate across the subgraphs via the mass matrix couplings.  Indeed, in the non‐overlapping setting, in \eqref{eq:system_one_realization} when $\mathbf{R}_1$ is chosen on the time interval $[\,t_k,t_{k+1}\,]$, the only nonzero off‐diagonal blocks of the mass matrix are rank‐one. Concretely, in \eqref{eq:system_one_realization} one finds
\begin{equation}\label{eq:submatrices_mass_revised}
    E_{12} =\frac{h}{6}\mathbf{e}_{N_1}\,\mathbf{e}_{1}^\top,\quad 
    E_{23} =\frac{h}{6}\mathbf{e}_{N_2}\mathbf{e}_{1}^\top,\quad 
    E_{21} =\frac{h}{6}\mathbf{e}_{1}\mathbf{e}_{N_2}^\top,\quad  
    E_{32} =\frac{h}{6}\mathbf{e}_{1}\mathbf{e}_{N_3}^\top,
\end{equation}
where $\mathbf{e}_{i}$ is the $i$-th canonical basis vector, and $N_{1}=3N+\overline N+1$, $
N_{2}=3N+\underline N+\overline N$ and $
N_{3}=2N+\underline N$. Observe that the information spreads along the whole graph because  \eqref{eq:relationfor_y2} and \eqref{eq:relationfor_y3}. Moreover, in this case, the coupling into $\boldsymbol{Y}_R^2$ and $\boldsymbol{Y}_R^3$ is governed by $\mathbf{Y}_{R,N_1}^1$, the  $N_1$-th component of $\mathbf{Y}_R^1$. Namely,
\begin{align}\label{eq:non_overlapping_coupling1}
\nonumber\boldsymbol{Y}_R^2(t)
&= \boldsymbol{Y}_R^2(t_k) 
- 
\left(E_{22} - E_{23}E_{33}^{-1}E_{32}\right)^{-1}E_{21}\left(\boldsymbol{Y}_{R,N_1}^1(t) -\boldsymbol{Y}_{R,N_1}^1(t_k)\right)
\\
\nonumber&= \boldsymbol{Y}_R^2(t_k)
-\frac{h}{6}\left(\boldsymbol{Y}_{R,N_1}^1(t) -\boldsymbol{Y}_{R,N_1}^1(t_k)\right)\left[\left( E_{22} - \frac{h^2}{36}\mathbf{e}_{N_2}\bigl(\mathbf{e}_{1}^\top E_{33}^{-1}\mathbf{e}_{1}\bigr)\mathbf{e}_{N_2}^\top\right)^{-1}\mathbf{e}_{1}\right] \\
&=: \boldsymbol{Y}_R^2(t_k)
-\frac{h}{6}\left(\boldsymbol{Y}_{R,N_1}^1(t) -\boldsymbol{Y}_{R,N_1}^1(t_k)\right)\bigl[\mathbf{T}^{-1}\mathbf{e}_{1}\bigr].
\end{align}
Similarly, 
\begin{align}\label{eq:non_overlapping_coupling2}
\nonumber\boldsymbol{Y}_R^3(t)
&= \boldsymbol{Y}_R^3(t_k) - E_{33}^{-1}E_{32}\bigl(\boldsymbol{Y}_R^2(t)-\boldsymbol{Y}_R^2(t_k)\bigr)
\\
&= \boldsymbol{Y}_R^3(t_k)
+\frac{h^2}{36}\left(\boldsymbol{Y}_{R,N_1}^1(t) -\boldsymbol{Y}_{R,N_1}^1(t_k)\right)
\bigl[\mathbf{T}^{-1}\mathbf{e}_{1}\bigr]_{N_2}\,
\bigl[E_{33}^{-1}\mathbf{e}_{1}\bigr],
\end{align}
where $\bigl[\mathbf{T}^{-1}\mathbf{e}_{1}\bigr]_{N_2}$ denotes the $N_2$‐th component of $\mathbf{T}^{-1}\mathbf{e}_{1}$. Consequently, even when only $\mathbf{R}_1$ drives the evolution of \eqref{eq:system_one_realization}, the rank‐one entries in $E_h$ ensure that the other components are \emph{never} decoupled: each vector $\boldsymbol{Y}_R^2$ and $\boldsymbol{Y}_R^3$ is influence by the boundary value $\boldsymbol{Y}_{R,N_1}^1$. In this sense, the information is propagated through the entire graph. Furthermore, since $E_{ij}=O(h)$, both $\mathbf{T}^{-1}$ and $E_{33}^{-1}$ are $O(1/h)$, and therefore the propagation introduced by \eqref{eq:non_overlapping_coupling1} and \eqref{eq:non_overlapping_coupling2} persists even for small $h$.
\end{remark}

\begin{remark}[Comments on finite differences]
The discretization using finite differences within each edge is analogous to the interval case. The discretization of the Neumann coupling conditions can be done by considering a first-order approximation using the boundary node and the previous one (inside the edge). 
Therefore, the discretization of the coupling conditions in \eqref{eq:heat_bc} defines an algebraic equation, and thus, the discretization of \eqref{eq:heat_net}-\eqref{eq:heat_bc} defines a differential-algebraic equation (DAE). Observe that the discretization at node $v_1$ leads to
\begin{align}\label{eq:discrete_coupling_cond}
      4y_{(0)}^{e_4} = y^{e_1}_{(N)} + y^{e_2}_{(1)} + y^{e_3}_{(1)} + y^{e_4}_{(1)}.
\end{align}
At this point, the discretization does not align with the RBM presented in \Cref{sec:rbm}. However, by differentiating \eqref{eq:discrete_coupling_cond} with respect to time, we obtain the following differential equation representing the coupling condition:
\begin{align}\label{eq:new_diff_coupling}
    \pt y_{(0)}^{e_4} = \frac{1}{4h}(4y_{(0)}^{e_4} - 2(y_{(N)}^{e_1} + y_{(1)}^{e_2} + y_{(1)}^{e_3} + y_{(1)}^{e_4}) + (y_{({N-1})}^{e_1} + y_{(2)}^{e_2} + y_{(2)}^{e_3} + y_{(2)}^{e_4})).
\end{align}
Assume that $y^0 \in H^1_0(\mathcal E)$ also satisfies the Neumann condition in \eqref{eq:heat_bc}, then conditions \eqref{eq:new_diff_coupling} is equivalent to \eqref{eq:discrete_coupling_cond}. Applying this to all interior vertices, we can express the discretization of \eqref{eq:heat_net}-\eqref{eq:heat_bc} as a single ODE. 
\end{remark}

\subsubsection{Optimal Control on a Graph}

Let us consider the same graph $\mathcal{G}$ introduced in the previous section. Denote by $(y,f)$ the optimal pair of the optimal control problem \eqref{eq:optimal_control} restricted to \eqref{eq:heat_net}-\eqref{eq:heat_bc} (replacing the norms in $L^2(0,L)$ with norms in $L^2(\mathcal{E})$). In this section, we will use the Discretize+RBM methodology to approximate the optimal pair. Let us consider the discrete optimal control \eqref{eq:finite_dim_ffunctional} restricted to \eqref{eq:variational_relation_y_graph}, and denote its optimal pair as $(y_h,f_h)$. Due to \cite{MR1119274}, \Cref{th:conver_optimal_control} holds for the optimal pairs $(y,f)$ and $(y_h,f_h)$ (replacing $L^2(0,L)$ with $L^2(\mathcal{E})$). Similar to \eqref{eq:characterization_norm_l2}, we observe that the vector of coefficient of $(y_h,f_h)$, satisfies the problem
\begin{align}\label{eq:funtional_D2}
   \min_{{\F_h} \in  L^2(0,T;\mathbb{R}^{10N+3})} \left\{ J_{\boldsymbol{E}_h}({\F_h}) = \frac{1}{2} \int_0^T \left( \| {\F_h}(t)\|^2_{\boldsymbol{E}_h} + \|\Y_h(t) - \y_{d,h}\|^2_{\boldsymbol{E}_h} \right) dt \right\},
\end{align}
with $\Y_h$ being the solution of \eqref{eq:discrete_network_fem}, $\boldsymbol{E}_h$ the mas matrix introduced in \Cref{subsec:numerics_equation}, and $\y_{d,h}$ the projection of $y_d$ onto the space $V^{\mathcal{E}}_h$.  Finally, consider the $\bomega$-dependent optimal control problem:
\begin{align}\label{eq:funtional_D2_random}
    \min_{{\F_R}(\bomega) \in L^2(0,T;\mathbb{R}^{10N+3})} \left\{ J^R_{\boldsymbol{E}_h}({\F_R}(\bomega)) = \frac{1}{2} \int_0^T \left( \| {\F_R}(t)\|^2_{\boldsymbol{E}_h} + \|\Y_R(t) - \y_{d,h}\|^2_{\boldsymbol{E}_h} \right) dt \right\},
\end{align}
where $\Y_R$ is the solution of the random dynamics \eqref{eq:random_network_fem}. Thus, by considering the projection of $(\Y_R,\F_R)$ into $V^{\mathcal{E}}_h$, we can conclude \Cref{th:op_random_approximation}, in a complete analogous way. In the following, we will consider the \textit{overlapping decomposition} and define $ R_{rb}$ as in \Cref{sec:overlapping_dec}. 

The numerical solution of the optimal control problem \eqref{eq:funtional_D2} is obtained via a gradient-descent algorithm that solves the corresponding adjoint system to compute the gradient of the functional. To solve \eqref{eq:funtional_D2_random}, we also employ the gradient-descent algorithm, with the adjoint equation solved using RBM. For both problems, we consider $N=30$ and a uniform time discretization $\Delta t = T/(\zeta-1)$, where $\zeta = 300$. The solution of each system is obtained using the implicit Euler scheme, wherein, for \eqref{eq:funtional_D2_random}, the $\delta$ parameter of the RBM coincides with the time step used in the ODE discretization. For simplicity, we set $y_d=1$. To compute the expectation of $\Y_R$, we perform $20$ realizations and average the results.

\begin{figure}[H]
    \centering
    \includegraphics[scale=0.5]{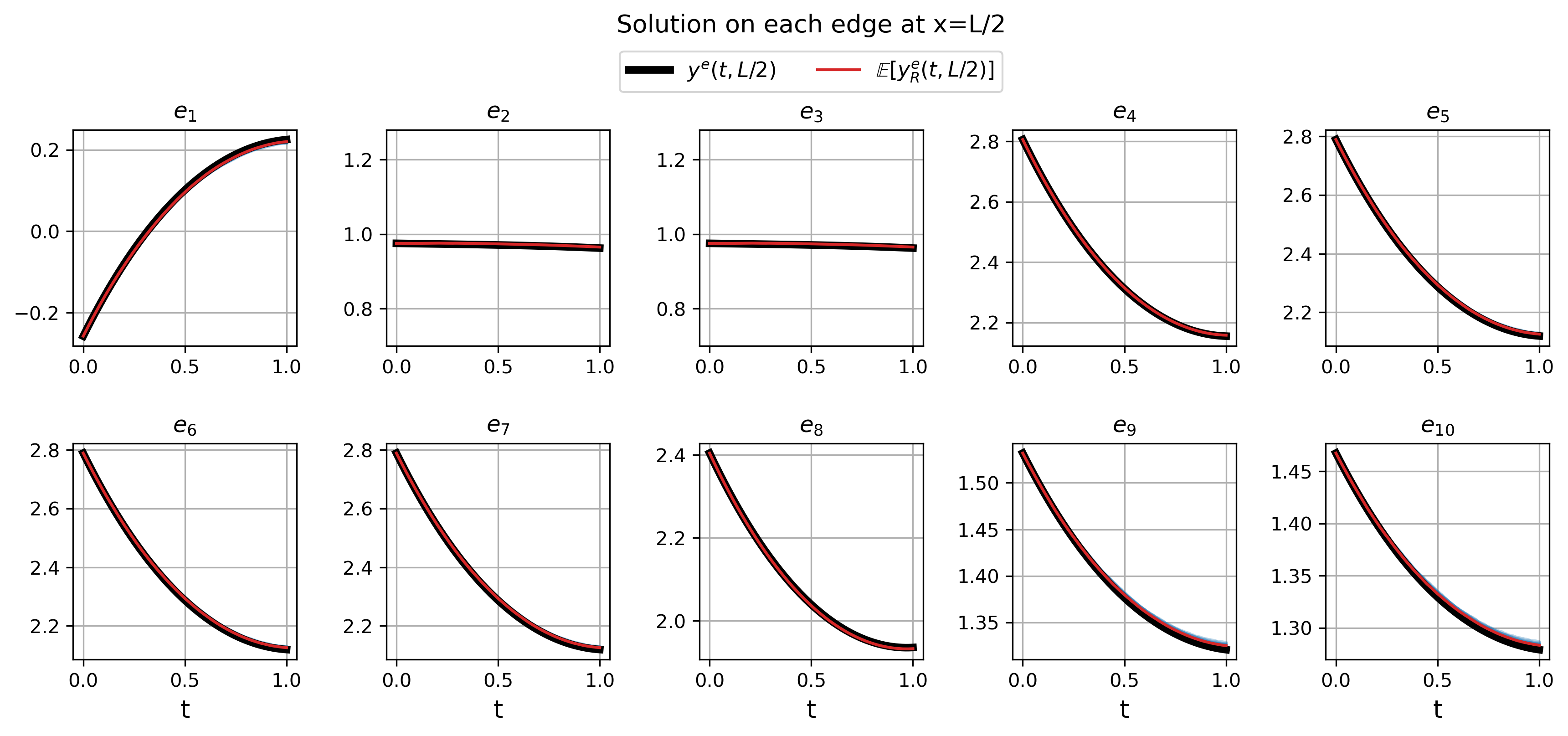}
    \caption{Comparison between the optimal state (in black) and the average optimal state at $x=L$ using the RBM (in red).}
    \label{fig:optimal_state}
\end{figure}

For both problems, the gradient descent tolerance is set to $10^{-8}$; that is, the iteration stops when the difference between consecutive controls is below $10^{-8}$. Figures~\ref{fig:optimal_state} and \ref{fig:optimal_control} display the optimal state and control, respectively, where the black curves denote the optimal solutions, and the red curves their RBM-based average.

\begin{figure}[H]
    \centering
    \includegraphics[scale=0.5]{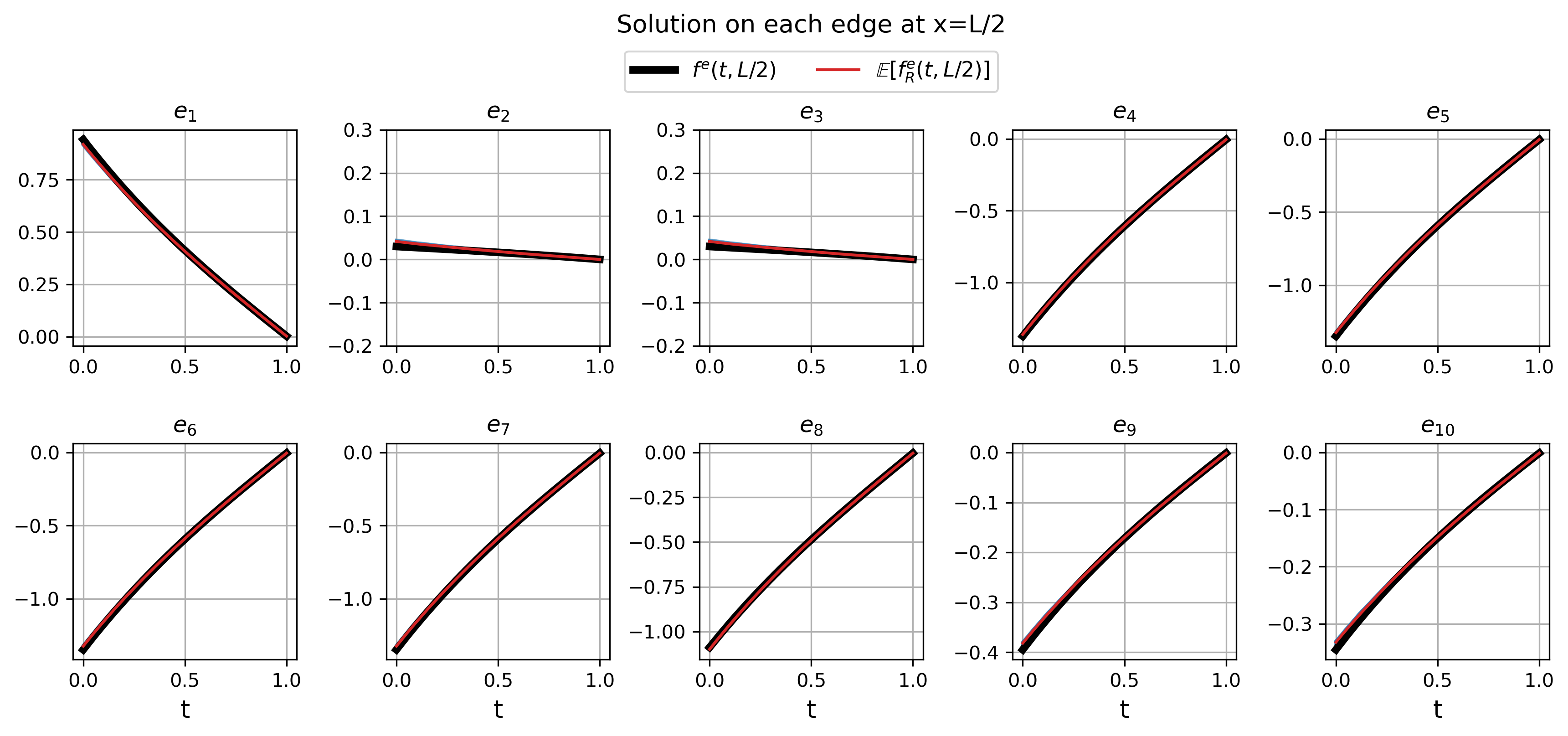}
    \caption{Comparison between the optimal control (in black) and the average optimal control at $x=L$ using the RBM (in red).}
    \label{fig:optimal_control}
\end{figure}

The expected optimal state and control of the random system closely approximate the true solutions. Table~\ref{tab:memory_oc} summarizes the average maximum memory usage and execution time for solving \eqref{eq:funtional_D2} and \eqref{eq:funtional_D2_random}. As noted in Section~\ref{sec:overlapping_dec}, the discretize+RBM method is both faster and more memory-efficient for the forward dynamics compared to the full-graph approach. Consequently, applying discretize+RBM to the adjoint state naturally accelerates the optimal control procedure. However, the memory usage for both problems is similar. One possible explanation is that the complete matrix $\boldsymbol{E}_h$ is required during the gradient descent iterations for both cases.
\begin{table}[H]
  \centering
   \caption{Comparison of average maximum memory usage and execution time between the discretize+RBM approach and the full-graph implicit Euler method for solving \eqref{eq:funtional_D2} and \eqref{eq:funtional_D2_random}. The gradient descent algorithm terminates for both approaches when the error falls below $e-8$.}\label{tab:memory_oc}
  \begin{tabular}{lccc}
    \toprule
     Method & Memory usage (MB)  & Execution time (s) \\
    \midrule
     Discretize+RBM & 125.44   & 358.96 \\
     Full-Graph &   129.52   & 600.65\\
    \bottomrule
  \end{tabular}
\end{table}

\section{Conclusions and Open Problems}\label{sec:c_and_o}

In this paper, we develop and analyze the \emph{discretize+RBM} method for fast and reliable computational approximation of the one-dimensional heat equation on graphs. We establish convergence of the method by carefully selecting the discretization and RBM parameters and analyzing the limit as they tend to zero.

We also illustrate our theoretical results with numerical experiments, where we observe that a suitable decomposition of the discrete system matrix exhibits behavior consistent with either overlapping or non-overlapping domain decomposition schemes, echoing classical techniques. These experiments for PDEs on graphs support our theoretical findings, demonstrating not only convergence but also a significant reduction in computational cost. This highlights the potential of the RBM as a versatile and efficient tool for solving and controlling PDEs on complex graph structures. Finally, we extend our analysis to the setting of optimal control.

Although this paper is devoted to the heat equation on graphs, this \emph{discretize+RBM} approach can also be applied to higher-dimensional domains, as well as a broader class of linear PDEs. Indeed, once the PDE is discretized, a system similar to \eqref{eq:discrete_network_fem} is obtained.
\smallbreak
In summary, for linear PDEs on graphs where computational efficiency is prioritized over exact precision, the Random Batch Method (RBM) provides a valuable solution. RBM enables the computation of approximate solutions more rapidly and with reduced memory requirements, while maintaining acceptable accuracy. This makes it particularly well-suited for applications such as gas network modeling.
\medbreak

Despite these findings, several open questions remain.
\smallbreak
{\bf 1) Extension of the Finite-Dimensional RBM:} Although \cite{MR4433122} provides a comprehensive analysis of RBM in finite-dimensional systems, there is currently no general theory for nonlinear ordinary differential equations (ODEs). RBM for nonlinear systems could be applied to power system models used in engineering that also involve graphs, such as Power Systems (see \cite[Section 3.5]{machowski2020power}). Additionally, like the present study, RBM for nonlinear systems could be extended to nonlinear PDEs.

On the other hand, a natural question that arises when introducing \eqref{eq:random_system} is whether it is possible to not only randomize the stiffness matrix $R_h$ but also to randomize the mass matrix $E_h$. This is not straightforward since \Cref{remark:mass_matrix} no longer holds, and the results in \cite{MR4433122} cannot be directly applied. Therefore, a careful analysis is required to prove (or refute) the convergence when $E_h$ is randomized.

{\bf 2) Infinite-dimensional RBM:} As shown in this article, it is possible to use the RBM techniques developed in \cite{MR4433122} for ODEs and apply them in the context of PDEs, after spatial discretization. 

It would be of interest to develop a RBM at the operator level, as in \cite{eisenmann2022randomized}, but without relying on time discretization. Specifically, one could aim for a space-time continuous RBM for PDEs based on a randomized splitting of the operator generating the dynamics. Such an approach may yield convergence guarantees that are independent of the spatial discretization parameter $h$, depending only on the RBM parameter $\delta$. Note that a true space-time continuous formulation cannot be inferred from \cite{eisenmann2022randomized}, as their analysis ties the time-discretization, the implicit-Euler scheme, and the RBM parameter $\delta$ together.

Conversely, in the context of domain decomposition (see \cite[Chapter III]{MR2093789} and \cite{MR1857663}), the transmission conditions prescribed at subdomain interfaces are both critical and scheme-dependent. In non-overlapping methods, one typically enforces Dirichlet–Neumann or Robin–Robin coupling to ensure stability and convergence, whereas overlapping Schwarz‐type algorithms exploit the overlap width to dampen interfacial errors, imposing fewer constraints on boundary exchanges. In contrast, our approach omits explicit interface conditions because the RBM approximates the PDE through a discretized PDE, paying the price of taking $\delta = O(h^7)$ to ensure convergence. To avoid such dependence, it is necessary to directly approximate the PDE with a fully continuous RBM scheme. To this end, it is crucial to determine the precise interface laws that ensure the scheme's well-posedness and convergence on average.

{\bf 3) Graph Partitioning:} In our numerical examples, we have demonstrated a partition inspired by overlapping and non-overlapping domain methods. However, since \Cref{th:convergence_RBM_FE} holds for any splitting of the stiffness matrix $R_h$, the question naturally arises of identifying a graph partition that optimizes the approximation. Three strategies for graph decomposition that may be of interest are as follows. The first is based on \textit{algebraic connectivity or Fiedler value}, namely, the second eigenvalue of the graph Laplacian, which quantifies the graph's connectivity and enables a partition based on this measure. The second approach employs \textit{machine learning} techniques, particularly the \textit{Lloyd algorithm}—an iterative method (based on Voronoi diagrams) used to divide space into distributed regions. Although this method may serve as an initial step toward a partition that accelerates the RBM, it requires the incorporation of dynamic information. One possibility is to consider a weighted graph (with weights on the edges or vertices) that captures partial information about the solution at each time, and then apply the aforementioned techniques. Finally, as indicated in \Cref{remark:conv_ht_hx}, accelerating convergence requires minimizing the constant $C(M)$, which depends on the chosen splitting $\{R^h_m\}_{m=1}^M$. Therefore, for a fixed $M$, one may formulate an \textit{optimization problem} to simultaneously minimize $C(M)$ and the $l^1$ (or $l^0$) norm of the matrices $\{R_m\}_{m=1}^M$, thereby promoting sparsity.

\vspace{5mm}
\section*{Acknowledgments}
The authors wish to express their gratitude to F. Hante and J. Giesselmann for their insightful discussions, and to S. Zamorano for taking the time to critically review the manuscript. 

M. Hern\'{a}ndez has been funded by the Transregio 154 Project, Mathematical Modelling, Simulation, and Optimization Using the Example of Gas Networks of the DFG, project C07, and the fellowship ``ANID-DAAD bilateral agreement". E. Zuazua has been funded by the Alexander von Humboldt-Professorship program, the European Research Council (ERC) under the European Union's Horizon 2030 research and innovation programme (grant agreement NO: 101096251-CoDeFeL), the ModConFlex Marie Curie Action, HORIZON-MSCA-2021-DN-01, the COST Action MAT-DYN-NET, the Transregio 154 Project Mathematical Modelling, Simulation and Optimization Using the Example of Gas Networks of the DFG, AFOSR  24IOE027 project, grants PID2020-112617GB-C22 and TED2021-131390B-I00 of MINECO (Spain), and grant PID2023-146872OB-I00 of MICIU (Spain).
Madrid Government - UAM Agreement for the Excellence of the University Research Staff in the context of the V PRICIT (Regional Programme of Research and Technological Innovation). Both authors have been partially supported by the DAAD/CAPES grant 57703041 ``Control and numerical analysis of complex systems", and DFG and NRF. Südkorea-NRF-DFG-2023 programme, grant 530756074, and  DASEL, Ciencia de Datos para Redes Eléctricas TED2021-131390B-I00/AEI/10.13039/501100011033.\\

\vspace{5mm}

\bibliographystyle{abbrv} 
\bibliography{biblio.bib}
\vfill

\end{document}